\newtheorem{teorema}{Theorem}[section]
\newtheorem{sublema}[teorema]{Sublemma}
\newtheorem{lema}[teorema]{Lemma}
\newtheorem{corolario}[teorema]{Corollary}
\newtheorem{ejemplo}[teorema]{Example}
\newtheorem{definicion}[teorema]{Definition}
\newtheorem{nota}[teorema]{Remark}
\def\bpm{B(\P^m,2)}
\def\fpm{F(\P^m,2)}
\def\TC{\mathrm{TC}}
\def\F2{\mathbb{F}_2}
\def\Z2{\mathbb{Z}_2}
\def\Sq{\mathrm{Sq}}
\def\sq1{\mathrm{Sq}^1}
\def\emb{\mathrm{Emb}}
\def\Imm{\mathrm{Imm}}
\def\P{\mathrm{P}}
\title{The integral cohomology of configuration spaces of pairs of points in real projective spaces}
\author{Carlos Dom\'{\i}nguez\hspace{.1mm}\footnote{Supported by Conacyt Ph.D.~scholarship number 162645.}\hspace{.5mm}, Jes\'us Gonz\'alez\hspace{.1mm}\footnote{Partially supported 
by CONACYT Research Grant number 102783.}\hspace{.5mm}, and
Peter Landweber}
\date{\empty}
\begin{document}

\maketitle

\begin{abstract}
We compute the integral cohomology ring of configuration spaces of two points on a given real projective space. Apart from an integral class, the resulting ring is a quotient of the known integral cohomology of the dihedral group of order~8 (in the case of unordered configurations, thus has only 2- and 4-torsion) or of the elementary abelian 2-group of rank 2 (in the case of ordered configurations, thus has only 2-torsion). As an application, we complete the computation of the symmetric topological complexity of real projective spaces $\P^{2^i+\delta}$ with \hspace{.4mm}$i\geq0$\hspace{.4mm} and 
\hspace{.4mm}$0\leq\delta\leq2$.
\end{abstract}

\noindent
{\small\it Key words and phrases: $2$-point configurations of real projective spaces; dihedral group of order~$8;$ Bockstein spectral sequence; symmetric topological complexity; Euclidean embedding dimension.}

\smallskip\noindent
{\small{\it 2010 Mathematics Subject Classification:} 
Primary: 55R80, 55T10; Secondary: 55M30, 57R19, 57R40.}

\section{A brief outline of the paper}
We compute the  
integral cohomology rings of 
$F(\P^m,2)$ and $B(\P^m,2)$,
the configuration spaces 
of two distinct points,
ordered and unordered respectively,
in the $m$-dimensional real projective 
space $\P^m$. Our explicit results are presented in Theorems~\ref{chf2pm}--\ref{decirbasicos} for $F(\P^m,2)$, and in Theorems~\ref{chb2pm}--\ref{baseB} for $B(\P^m,2)$. Proofs are given in Section~\ref{HF} for $F(\P^m,2)$, and in Sections~\ref{ahoralaB} and~\ref{HBring} for $B(\P^m,2)$.

\medskip
These rather technical calculations arose from a study of the symmetric topological complexity ($\TC^S$) of $\P^m$, and its relation to the embedding dimension of this manifold (Section~\ref{tcsapplicacion} recalls the basics of this relationship). In particular, our cohomological calculations allow us to complete the determination, started in \cite{symmotion}, of $\TC^S(\P^{2^i+\delta})$ for $i\geq0$ and $0\leq\delta\leq2$. The explicit new $\TC^S$-result is given in Theorem~\ref{STC}; the global $\TC^S$-picture for these projective spaces is summarized in~(\ref{fam1})--(\ref{laexcepcion}).

\section{Cohomology rings}\label{descriptionofcohomologias}
Unless indicated otherwise, the notation $H^*(X)$ refers to the integral cohomology ring of a space $X$ where a simple system of local coefficients is used. The degree of a cohomology class is explicitly indicated by means of an subscript: $c_k\in H^k(X)$. The cyclic group with $2^e$ elements is denoted by $\mathbb{Z}_{2^e}$. In the case $e=1$ we also use the notation $\F2$ if the field structure is to be noted. It will be convenient to use the notation $\langle k\rangle$ for the elementary abelian 2-group of rank $k$, and write $\{k\}$ as a shorthand for $\langle k\rangle\oplus\mathbb{Z}_4$.

\medskip
Recall that the ring $H^*(\P^\infty\times\P^\infty)$ is generated over the integers by three classes $x_2$, $y_2$, and $z_3$ subject only to the four relations
\begin{equation}\label{relacionesenteras}
2x_2=0,\;\;\;\;2y_2=0,\;\;\;\;2z_3=0,\;\;\;\mbox{and}\;\;\;\;z_3^2+x_2y_2(x_2+y_2)=0.
\end{equation}
The mod 2 reduction map $\rho\colon H^*(\P^\infty\times\P^\infty)\to H^*(\P^\infty\times\P^\infty;\F2)$ is characterized by 
\begin{equation}\label{losvaloresdelareduction}
\rho(x_2)=x_1^2, \;\;\;\;\rho(y_2)=
y_1^2, \;\;\;\;\mbox{and} \;\;\;\;
\rho(z_3)=x_1y_1(x_1+y_1). 
\end{equation}
Here $x_1, y_1\in H^*(\P^\infty\times\P^\infty;\F2)=H^*(\P^\infty;\F2)\otimes H^*(\P^\infty;\F2)$ are given by $x_1=z_1\otimes1$ and $y_1=1\otimes z_1$ where $z_1\in H^1(\P^\infty;\F2)$ is the generator (cf.~\cite[Example~3E.5]{hatcher}). We also use the notation $x_2$, $y_2$, and $z_3$ (with integral coefficients), as well as $x_1$ and $y_1$ (with mod 2 coefficients) for the images of the corresponding classes under the homomorphism of cohomology rings induced by the obvious inclusion
\begin{equation}\label{obvious-inclusion}
\alpha\colon F(\P^m,2)\hookrightarrow\P^\infty\times\P^\infty.
\end{equation}

\begin{teorema}\label{chf2pm} 
Let $m=2t+\delta$, $\delta\in\{0,1\}$. The following relations hold in $H^*(F(\P^m,2))\hspace{-1mm}:$
\begin{equation}\label{comunes}
x_2^{t+1}=0,\;\;\;\;y^{t+1}_2=0,\;\;\;\mbox{and}\;\;\sum _{i,j\geq0,\;i+j=t}\hspace{-2mm}x_2^i y_2^jz_3=0.
\end{equation}
\begin{itemize}
\item[{\em(a)}] If $\delta=0$, the integral cohomology ring $H^*(F(\P^m, 2))$ is generated by $x_2$, $y_2$, $z_3$, and a class $w_{2m-1}$ subject only to the relations~\emph{(\ref{relacionesenteras}), (\ref{comunes}),} and
\begin{equation}\label{lasrelsevens}
x_2^t y_2^t =0,\;\;\;\;\sum x^i_2y^j_2z_3=0,\;\;\;\mbox{and}\;\;\;\;w_{2m-1}\mu=0,
\end{equation}
for $\mu\in\{x_2,y_2,z_3,w_{2m-1}\}$, where the sum in~\emph{(\ref{lasrelsevens})} runs over $i,j\geq 0$ with $i+j=t-1$.
\item[{\em(b)}] If $\delta=1$, the integral cohomology ring $H^*(F(\P^m, 2))$ is generated by $x_2$, $y_2$, $z_3$, and a class $w_m$ subject only to the relations~\emph{(\ref{relacionesenteras}), (\ref{comunes}),} and
\begin{equation}\label{lasrelsoddss}
w_my_2+x_2^tz_3=0\;\;\;\mbox{and}\;\;\;\;w_m\mu=0,\;\;\mbox{for}\;\;\mu\in\{x_2,z_3,w_m\}.
\end{equation}
\end{itemize}
\end{teorema}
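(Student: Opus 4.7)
The plan is to analyze the long exact sequence of the pair $(\P^m\times\P^m,\,F(\P^m,2))$. By excision and the (twisted) Thom isomorphism applied to the normal bundle $T\P^m$ of the diagonal $\Delta\cong\P^m\subset\P^m\times\P^m$, the pair cohomology takes the form $H^k(\P^m\times\P^m,F(\P^m,2))\cong H^{k-m}(\P^m;\mathcal{O})$, where $\mathcal{O}$ is the orientation sheaf of $T\P^m$: trivial when $\delta=1$ and the sign representation of $\pi_1\P^m$ when $\delta=0$. This long exact sequence ties $H^*(\P^m\times\P^m)$ to a sparse ``diagonal'' contribution that should pin down the additional top-dimensional class $w$.

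First I would verify that the common relations~(\ref{comunes}) hold. The pullbacks of $x_2,y_2,z_3$ via $\alpha$ factor through $H^*(\P^m\times\P^m)$, and inside that ring $x_2^{t+1}=y_2^{t+1}=0$ is immediate since each $\P^m$ factor has no cohomology beyond degree $m\leq 2t+1$. The third relation, $\sum_{i+j=t}x_2^iy_2^jz_3=0$, I would establish by rewriting the left-hand side as $\bigl((x_2^{t+1}-y_2^{t+1})/(x_2-y_2)\bigr)z_3$ (a genuine polynomial identity in $\mathbb{Z}[x_2,y_2]$) and invoking the vanishing of $x_2^{t+1}$ and $y_2^{t+1}$ together with the fundamental relation $z_3^2=x_2y_2(x_2+y_2)$ from~(\ref{relacionesenteras}).

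Next I would produce $w$ from the top of the long exact sequence. In case~(a), $m=2t$ and $T\P^m$ is non-orientable, so Poincar\'e duality with twisted coefficients gives $H^{2t}(\P^{2t};\mathcal{O})\cong H_0(\P^{2t})=\mathbb{Z}$, which maps to $H^{4t}(\P^{2t}\times\P^{2t})=\mathbb{Z}/2$ by mod-$2$ reduction; the kernel of this map produces a free class $w_{2m-1}\in H^{2m-1}(F(\P^m,2))$. In case~(b), $m=2t+1$ and $T\P^m$ is orientable with vanishing Euler class (because $\chi(\P^{2t+1})=0$), so the connecting homomorphism gives rise to $w_m\in H^m(F(\P^m,2))$ coming from the $\mathbb{Z}$ summand $H^0(\P^m)=H^m(\P^m\times\P^m,F(\P^m,2))$. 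The mixed relation $w_my_2+x_2^tz_3=0$ in case~(b) is the most delicate point, and I would deduce it by computing the image of the Thom class $\Delta_*(1)\in H^m(\P^m\times\P^m)$ explicitly in terms of $x_2,y_2,z_3$ and matching this against the contribution of $w_m$ to $H^{m+2}$ through the Leibniz-like behavior of the connecting map under multiplication by $y_2$.

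The remaining relations $w\cdot\mu=0$ are then largely forced by dimension, since $F(\P^m,2)$ has real dimension $2m$. In case~(a), every product $w_{2m-1}\mu$ with $\deg(\mu)>0$ has total degree exceeding $2m$ and vanishes. In case~(b), $w_mx_2$, $w_mz_3$ and $w_m^2$ either exceed dimension or sit in a degree where the LES's Thom-side contribution vanishes. Finally, to confirm the list of relations is complete, I would count ranks degree by degree: the LES, together with the mod-$2$ cohomology of $F(\P^m,2)$ (computed elsewhere) and its Bockstein spectral sequence, determines $H^*(F(\P^m,2))$ as a graded abelian group, and matching this with the additive structure of the presented ring closes the argument. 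I expect the main obstacle to be the precise identification of $w_my_2+x_2^tz_3=0$ in case~(b), which depends on tracking the connecting homomorphism through a subtle mixture of torsion and free classes, and the parallel completeness check that no hidden relations lurk above dimension~$m$.
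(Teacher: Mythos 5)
Your derivation of the third relation in~(\ref{comunes}) does not work. From $x_2^{t+1}=0=y_2^{t+1}$ you may conclude $(x_2-y_2)\cdot\sum_{i+j=t}x_2^iy_2^jz_3=0$, but you cannot ``divide by $x_2-y_2$'' in the cohomology ring, and the relation $z_3^2+x_2y_2(x_2+y_2)=0$ does not rescue the argument. More decisively, the class $\sum_{i+j=t}x_2^iy_2^jz_3$ is \emph{not} zero in $H^*(\P^m\times\P^m)$ (nor in $H^*(\P^\infty\times\P^\infty)$ modulo $x_2^{t+1},y_2^{t+1}$): for example, for $m=4$, $t=2$, its mod~$2$ reduction is $x_1^4y_1^3+x_1^3y_1^4\neq0$ in $\F2[x_1,y_1]/(x_1^5,y_1^5)$, and reduction is injective on the exponent-$2$ torsion of $H^*(\P^4\times\P^4)$. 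Hence no computation carried out purely inside $H^*(\P^m\times\P^m)$ (which is where your argument lives, since it only uses that $x_2,y_2,z_3$ pull back from there) can establish this relation; it genuinely requires the passage to $F(\P^m,2)$, i.e.\ the vanishing of the restricted diagonal class $\sum_{i+j=m}x_1^iy_1^j$ from Lemma~\ref{chf2pmmod2} --- exactly the Thom-class input you set up in your first paragraph but then do not use at this step. (The paper checks all torsion relations after mod~$2$ reduction in $H^*(F(\P^m,2);\F2)$, which is legitimate because the torsion injects under reduction.)

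Two further gaps. First, in case~(b) the products $w_mx_2$ and $w_mz_3$ lie in degrees $m+2$ and $m+3$, which are at most $2m-1$ once $m\geq5$, so they are not killed by dimension, and ``a degree where the LES's Thom-side contribution vanishes'' is not by itself a reason for a product to vanish; what is needed is a normalization of $w_m$ (e.g.\ as the restriction of the fundamental class of the first factor, equivalently $\rho(w_m)=x_1^m$ as in~(\ref{lareducciondelasws})), after which these products --- and the delicate relation $w_my_2+x_2^tz_3=0$ --- can be checked mod~$2$. Second, and most substantially, the assertion ``subject \emph{only} to'' is where the real work lies: your plan outsources the additive determination of $H^*(F(\P^m,2))$ to ``the mod-$2$ cohomology and its Bockstein spectral sequence, computed elsewhere'' (this is the bulk of the paper's Section~\ref{HF}), and it also needs an additive analysis of the abstract ring presented by~(\ref{relacionesenteras}), (\ref{comunes})--(\ref{lasrelsoddss}) (the paper's monomial count in the model $\mathcal{R}_m$, matched against the basis of Theorem~\ref{decirbasicos}); neither count is carried out, so completeness of the relation list is not established. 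The integral Gysin/long-exact-sequence viewpoint is an attractive alternative for producing $w_{2m-1}$ and $w_m$, but as written the proposal has genuine gaps at the points above.
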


Note that the ($x_2$ vs.~$y_2$)-symmetry in the presentation for $H^*(F(\P^{2t},2))$ no longer holds in~(\ref{lasrelsoddss}). Although this is an intrinsic phenomenon for $m\equiv3\bmod4$, the asymmetry is only apparent for $m\equiv1\bmod4$: in terms of the torsion-free generator $w'_{4\ell+1}=w_{4\ell+1}+z_3(x_2^{2\ell-1}+x_2^{2\ell-2}y_2+\cdots+x_2^\ell y_2^{\ell-1})$,~(\ref{lasrelsoddss}) is replaced by the ($x_2$ vs.~$y_2$)-symmetric relations $w'_{4\ell+1}x_2=(x_2^{2\ell}+\cdots+x_2^{\ell+1}y_2^{\ell-1})z_3$, $w'_{4\ell+1}y_2=(y_2^{2\ell}+\cdots+y_2^{\ell+1}x_2^{\ell-1})z_3$, $w'_{4\ell+1}z_3=x_2^{\ell+1}y_2^{\ell+1}$, and $(w'_{4\ell + 1})^2 = 0$.

\medskip
The relations listed in Theorem~\ref{chf2pm} are minimal for $m\geq3$, and lead to explicit descriptions of cohomology groups (Theorem~\ref{descripcionordenada} next) and $\F2$-bases for torsion subgroups (Theorem~\ref{decirbasicos} following).

\begin{teorema}\label{descripcionordenada}
For $\,t\geq1$,

\vspace{-4mm}$$
H^i(F(\P^{2t},2)) = \begin{cases}
\mathbb{Z}, & i=0\mbox{ ~or~ }i=4t-1;\\
\left\langle\frac{i}2+1\right\rangle, & i\mbox{ \hspace{.3mm}even,~ }1\leq i\leq2t;\\
\left\langle\frac{i-1}2\right\rangle, & i\mbox{ \hspace{.3mm}odd,~ }1\leq i\leq2t;\\
\left\langle2t+1-\frac{i}2\right\rangle, & i\mbox{ \hspace{.3mm}even,~ }
2t<i<4t-1;\rule{6mm}{0mm}\\
\left\langle2t-\frac{i+1}2\right\rangle, & i\mbox{ \hspace{.3mm}odd,~ }2t<i<4t-1;\\
0, & \mbox{otherwise}.
\end{cases}
$$
For $\,t\geq0$,

\vspace{-5mm}$$
H^i(F(\P^{2t+1},2)) = \begin{cases}
\mathbb{Z}, & i=0;\\
\left\langle\frac{i}2+1\right\rangle, & i\mbox{ \hspace{.3mm}even,~ }1\leq i\leq2t;\\
\left\langle\frac{i-1}2\right\rangle, & i\mbox{ \hspace{.3mm}odd,~ }1\leq i\leq2t;\\
\mathbb{Z}\oplus\langle t\rangle, & i=2t+1;\\
\left\langle2t+1-\frac{i}2\right\rangle, & i\mbox{ \hspace{.3mm}even,~ }2t+1<i\leq4t+1;\\
\left\langle2t+1-\frac{i-1}2\right\rangle, & i\mbox{ \hspace{.3mm}odd,~ }2t+1<i\leq4t+1;\\
0, & \mbox{otherwise}.
\end{cases}
$$
\end{teorema}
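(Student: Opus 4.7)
The plan is to deduce Theorem~\ref{descripcionordenada} as a direct counting corollary of the ring presentation in Theorem~\ref{chf2pm}. The key structural observation is that the generators $x_2, y_2, z_3$ all have additive order $2$ while $w$ has infinite order with $w\mu = 0$ for every other generator $\mu$; consequently $H^i(F(\P^m, 2))$ splits additively as an elementary abelian $2$-group spanned by torsion monomials in $x_2, y_2, z_3$, plus one copy of $\mathbb{Z}$ in degree $2m-1$ (if $\delta = 0$) or in degree $m$ (if $\delta = 1$). Using $z_3^2 = x_2 y_2(x_2+y_2)$ together with $x_2^{t+1} = y_2^{t+1} = 0$, every torsion class reduces to an $\F2$-linear combination of monomials $x_2^a y_2^b$ (in even degrees) or $x_2^a y_2^b z_3$ (in odd degrees) with $0 \le a, b \le t$.

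It then remains to determine the precise $\F2$-rank in each degree by imposing the remaining relations. For small $i \le 2t$ no further relation acts, yielding $k+1$ classes in even degree $2k$ and $k$ classes in odd degree $2k+1$, matching the stated formulas. For even $i > 2t$ the symmetric bounds $a, b \le t$ give $2t+1-\frac{i}{2}$ classes, and $x_2^t y_2^t = 0$ (for $\delta = 0$) eliminates the single potential class in degree $4t$. For odd $i > 2t$, I would systematically multiply the two relations $\sum_{a+b=t-1} x_2^a y_2^b z_3 = 0$ and $\sum_{a+b=t} x_2^a y_2^b z_3 = 0$ by appropriate monomials in $x_2, y_2$; a short computation using $x_2^{t+1} = y_2^{t+1} = 0$ yields in particular the derived vanishings $x_2^t z_3 = y_2^t z_3 = 0$, and propagates upward to fix the claimed rank $2t - \frac{i+1}{2}$ in each odd degree. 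In the $\delta = 1$ case the absence of the $\sum_{a+b=t-1}$ relation accounts for the larger rank $t$ of the torsion summand in degree $2t+1$, while the relation $w_m y_2 + x_2^t z_3 = 0$ identifies the torsion class $x_2^t z_3$ with a multiple of $w_m$ in degree $2t+3$ without altering the count; $w_m$ itself then contributes the $\mathbb{Z}$-summand in $H^{2t+1}$.

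The main obstacle is to verify that the listed relations and their multiplicative consequences account for all $\F2$-dependences among these monomials, i.e., that the proposed dimension count is sharp in every degree. I would confirm this by comparing with the mod~$2$ cohomology $H^*(F(\P^m, 2); \F2)$, which is classical and computable: via the universal coefficient theorem, $\dim_{\F2} H^i(-; \F2)$ equals twice the integral free rank in degree $i$ plus the $\F2$-ranks of the integral torsion in degrees $i$ and $i+1$. Matching these dimensions term-by-term forces sharpness, since any missing integral relation would produce a numerical discrepancy, and this check simultaneously verifies the piecewise formulas of Theorem~\ref{descripcionordenada}.
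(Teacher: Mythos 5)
There is a genuine gap here, and it is structural: your argument presupposes Theorem~\ref{chf2pm} in its strong form (``generated by \dots subject \emph{only} to the relations\dots''), and in this paper that completeness statement is not available before Theorem~\ref{descripcionordenada} --- it is proved \emph{from} it. The paper first computes the additive structure (Theorems~\ref{descripcionordenada} and~\ref{decirbasicos}) directly from the mod~$2$ ring of Lemma~\ref{chf2pmmod2} via the $2$-primary Bockstein spectral sequence (with an auxiliary filtration), which is what shows that all torsion is elementary abelian $2$-torsion, that the only free summand is the one carried by $w_m$ or $w_{2m-1}$, and that there is no odd torsion; only then is the presentation of Theorem~\ref{chf2pm} verified, precisely by matching the monomial count in the model ring $\mathcal{R}_m$ against the already-established basis of Theorem~\ref{decirbasicos}. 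So deducing Theorem~\ref{descripcionordenada} ``as a direct counting corollary of Theorem~\ref{chf2pm}'' is circular unless you supply an independent proof of the completeness of that presentation --- and if you only grant that the listed relations \emph{hold} (not that they suffice), your monomial count gives upper bounds for the subring generated by $x_2,y_2,z_3$ but says nothing about possible extra torsion (including $\mathbb{Z}_4$ summands or odd torsion) outside that subring, nor about extra free summands; those exclusions are exactly the content of the Bockstein analysis you would be bypassing.

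Your proposed sharpness check also does not close this gap as stated. The universal coefficient relation is $\dim_{\mathbb{F}_2}H^i(X;\mathbb{F}_2)=r_i+t_i+t_{i+1}$, where $r_i$ is the free rank of $H^i(X)$ (counted \emph{once}, not twice) and $t_j$ is the number of cyclic $2$-power summands of $H^j(X)$; with ``twice the integral free rank'' your term-by-term comparison would report a spurious discrepancy exactly in the degrees carrying the $\mathbb{Z}$-summand ($i=4t-1$, resp.\ $i=2t+1$), where $\dim_{\mathbb{F}_2}H^i(F(\P^m,2);\mathbb{F}_2)$ is $1$, resp.\ $t+1$. With the corrected identity, a downward induction on $i$ (starting above the top dimension $2m-1$) together with upper bounds $t_i\le c_i$ and knowledge of the $r_i$ would indeed force $t_i=c_i$; but, as above, both the upper bounds for \emph{all} of the torsion and the values of $r_i$ are exactly what you cannot extract from the relations alone without the spectral-sequence input. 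The viable repair is to argue as the paper does: take Lemma~\ref{chf2pmmod2}, compute the $\Sq^1$-homology (the paper filters by powers of $y_1$ to organize this), conclude that the Bockstein spectral sequence collapses at its second page so that all torsion has order $2$ and the image of $\Sq^1$ is an $\mathbb{F}_2$-basis of the torsion, and then your counting of the classes $\rho(x_2^ay_2^b)$ and $\rho(x_2^ay_2^bz_3)$ (which is essentially the count you sketch) legitimately yields the groups of Theorem~\ref{descripcionordenada}.
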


\begin{teorema}\label{decirbasicos}
Let $m=2t+\delta$ with $\delta\in\{0,1\}$. A graded $\,\F2$-basis for the torsion subgroups of $H^*(F(\P^m,2))$ can be chosen as follows: In even dimensions the basis consists of the monomials $x_2^iy_2^j$ with $0\leq i,j\leq t$, $(i,j)\neq(0,0)$ and, if $\,\delta=0$, $(i,j)\neq(t,t)$. In odd dimensions the basis consists of monomials $x_2^iy_2^jz_3$ with $0\leq i\leq t-1+\delta$ and $0\leq j\leq t-2+\delta$.
\end{teorema}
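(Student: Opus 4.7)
The plan is to leverage Theorem~\ref{chf2pm} to produce a spanning set of torsion monomials, and Theorem~\ref{descripcionordenada} to confirm, via a dimension count, that this spanning set is in fact a basis. The key structural observation is that $H^*(F(\P^m,2))$ has only $2$-torsion, so its torsion subgroup is an $\F2$-vector space; a spanning set of the right cardinality is then automatically a basis.

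First I would show that the $\F2$-span of monomials in $x_2, y_2, z_3$ exhausts the torsion. For $\delta = 0$ this is immediate because $w_{2m-1}$ annihilates $x_2, y_2, z_3$. For $\delta = 1$, the relations $w_m x_2 = w_m z_3 = w_m^2 = 0$ together with $w_m y_2 = x_2^t z_3$ force every monomial containing $w_m$ either to vanish or to rewrite as $w_m y_2^j = x_2^t y_2^{j-1} z_3$. The relation $z_3^2 = x_2^2 y_2 + x_2 y_2^2$ (from~(\ref{relacionesenteras}) mod~$2$) then restricts the $z_3$-exponent to $\{0,1\}$, while $x_2^{t+1} = y_2^{t+1} = 0$ bounds the remaining exponents to at most $t$. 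At this stage the torsion is spanned by the unit-deleted set $\{x_2^i y_2^j : 0 \le i,j \le t\} \cup \{x_2^i y_2^j z_3 : 0 \le i,j \le t\}$, with $(i,j) = (0,0)$ removed from the first family.

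Next I would apply the remaining relations to trim this set down to the claimed basis. In even degree, when $\delta = 0$, the extra relation $x_2^t y_2^t = 0$ removes the single monomial with $(i,j) = (t,t)$. In odd degree, I would use the chain relation $\sum_{i+j=t} x_2^i y_2^j z_3 = 0$ from~(\ref{comunes}): multiplying by $x_2^a y_2^b$ and truncating via $x_2^{t+1} = y_2^{t+1} = 0$ gives, for $s = a + b \in \{0, 1, \ldots, t\}$, the shifted relation $R_s = \sum_{k=s}^{t} x_2^k y_2^{t+s-k} z_3 = 0$ in degree $2(t+s)+3$. Each $R_s$ lets me solve for $x_2^s y_2^t z_3$ in terms of monomials with $j < t$, eliminating the $t+1$ monomials $\{x_2^s y_2^t z_3 : 0 \le s \le t\}$ and yielding the basis stated for $\delta = 1$. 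For $\delta = 0$, I would repeat the scheme with the additional chain $\sum_{i+j=t-1} x_2^i y_2^j z_3 = 0$ from~(\ref{lasrelsevens}); combining the two families yields $x_2^t z_3 = y_2^t z_3 = 0$ and, iteratively, eliminates all monomials $x_2^i y_2^j z_3$ with $i = t$ or $j \ge t-1$.

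Finally, a direct tally dimension by dimension---using the separate rank formulas of Theorem~\ref{descripcionordenada} in the low- and high-degree ranges, and accounting for the $\mathbb{Z}\oplus\langle t\rangle$ summand at degree $2t+1$ when $\delta = 1$---verifies that the proposed basis has the cardinality prescribed by the theorem. Since the torsion is a finite $\F2$-vector space and our candidate spans it with the correct total dimension, it must be a basis. The main obstacle lies in the odd-degree trimming step: one must verify that the relations contributed by the shifted chains (one family for $\delta = 1$, two for $\delta = 0$) are $\F2$-linearly independent in the exact count required to match the rank drop predicted by Theorem~\ref{descripcionordenada}. For instance, in odd degree $2t+3$ with $\delta = 0$, the relations $R_0$, $x_2 R'$ and $y_2 R'$ (with $R'$ the $(t-1)$-chain) span a $3$-dimensional $\F2$-subspace of relations, which is precisely the drop from $t+1$ candidate monomials to the $t-2$ basis elements demanded by Theorem~\ref{descripcionordenada}.
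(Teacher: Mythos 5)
Your bookkeeping is internally consistent: the spanning argument from the relations of Theorem~\ref{chf2pm} (including the rewriting of $w_m$-monomials, the truncation of $z_3$-powers, and the shifted chain relations $R_s$ used to eliminate the monomials with large $y_2$- or $x_2$-exponent) is correct, and your dimension-wise tallies do match the ranks listed in Theorem~\ref{descripcionordenada}. The problem is the logical direction. In this paper Theorem~\ref{descripcionordenada} is not available as an independent input: its proof is precisely ``a simple counting'' of the monomials whose independence is the content of Theorem~\ref{decirbasicos}, and likewise the completeness of the presentation in Theorem~\ref{chf2pm} (which you invoke when you assert that monomials in $x_2,y_2,z_3$ span the torsion and that the listed relations are all you need) is itself deduced from Theorem~\ref{decirbasicos} via the auxiliary ring $\mathcal{R}_m$. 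So the crucial step---linear independence of the proposed monomials, equivalently the rank formulas---is never actually proved in your proposal; it is imported from a theorem whose own proof rests on the statement you are proving. As written, the argument is circular within the paper's development, and the final ``relations drop the rank by exactly the right amount'' check is a consistency check, not a proof of independence.

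What the paper does instead is prove Theorem~\ref{decirbasicos} first, directly from the Bockstein spectral sequence: the BSS analysis (items (a) and (b) after~(\ref{endosq1})) shows that the torsion of $H^*(F(\P^m,2))$ injects under mod~$2$ reduction onto the image of $\Sq^1$ on $H^*(F(\P^m,2);\F2)$, and then an explicit computation of $\Sq^1$ on the monomial basis~(\ref{baseasimetrica}) of Lemma~\ref{chf2pmmod2} identifies that image with the classes~(\ref{bonche1})--(\ref{bonche3}), i.e.\ with the mod~$2$ reductions of exactly the monomials in the statement; their linear independence is visible in the explicitly presented ring of Lemma~\ref{chf2pmmod2}, and Theorem~\ref{descripcionordenada} then follows by counting. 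To repair your proposal you would either have to supply such an independent computation of the ranks (at which point you are redoing the paper's $\Sq^1$-argument), or prove independence of your monomials directly, e.g.\ by exhibiting their linearly independent mod~$2$ reductions in $H^*(F(\P^m,2);\F2)$---which again is the paper's route.
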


The following is a straightforward consequence of the three results above.

\begin{corolario}\label{consecuenciasF}
The map induced in integral cohomology by~\emph{(\ref{obvious-inclusion})}:
\emph{\begin{enumerate}
\vspace{-2mm}
\item\label{factepiF}\emph{surjects in positive dimensions onto the torsion subgroups of $H^*(F(\P^m,2));$}
\vspace{-3.2mm}
\item\label{factcokerF}\emph{has cokernel generated by $w_m$ (when $m$ is odd) and $w_{2m-1}$ (when $m$ is even)\emph{;}}
\vspace{-3.2mm}
\item\label{factkerF}\emph{is injective in dimensions at most $m$.}
\end{enumerate}}
\end{corolario}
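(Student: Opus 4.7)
The plan is to read each claim off directly from the ring presentation in Theorem~\ref{chf2pm} and the additive description in Theorem~\ref{decirbasicos}. Since $\alpha^{*}$ sends the three ring generators $x_{2}$, $y_{2}$, $z_{3}$ of $H^{*}(\P^{\infty}\times\P^{\infty})$ to the classes named $x_{2}$, $y_{2}$, $z_{3}$ in the target, its image is tautologically the subring of $H^{*}(F(\P^{m},2))$ generated by those three classes.

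For~(\ref{factepiF}) I would invoke Theorem~\ref{decirbasicos}, which explicitly exhibits an $\F2$-basis for the torsion subgroup in each positive dimension as monomials in $x_{2}$, $y_{2}$, and $z_{3}$; every such monomial is visibly in $\mathrm{Im}(\alpha^{*})$.

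For~(\ref{factcokerF}), Theorem~\ref{chf2pm} asserts that, as a ring, $H^{*}(F(\P^{m},2))$ requires only one generator beyond $x_{2}$, $y_{2}$, $z_{3}$: namely $w_{m}$ for odd $m$ and $w_{2m-1}$ for even $m$. So modulo $\mathrm{Im}(\alpha^{*})$ the cokernel is generated additively by the $w$-class together with its products with $x_{2}$, $y_{2}$, $z_{3}$, and itself. In the even case, (\ref{lasrelsevens}) kills all such products at once. In the odd case, (\ref{lasrelsoddss}) kills $w_{m}x_{2}$, $w_{m}z_{3}$, and $w_{m}^{2}$, while the sole nontrivial product $w_{m}y_{2}=-x_{2}^{t}z_{3}$ already lies in $\mathrm{Im}(\alpha^{*})$ (and the higher powers $w_{m}y_{2}^{k}$ reduce similarly). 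Hence the cokernel is generated by the $w$-class alone.

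For~(\ref{factkerF}), $\ker(\alpha^{*})$ is the ideal in $H^{*}(\P^{\infty}\times\P^{\infty})$ generated by the relations of the target that do not already hold in the source, namely (\ref{comunes}) together with the first two relations of (\ref{lasrelsevens}) when $m$ is even. Each of these generating relations has cohomological degree strictly greater than $m=2t+\delta$, so $\ker(\alpha^{*})$ vanishes in dimensions $\le m$. The only delicate step in the entire argument is the observation, in the odd case of~(\ref{factcokerF}), that the nonzero product $w_{m}y_{2}$ already lies in $\mathrm{Im}(\alpha^{*})$ and hence does not enlarge the cokernel beyond the class of $w_{m}$; everything else is immediate.
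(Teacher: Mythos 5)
Your handling of parts~\ref{factepiF} and~\ref{factcokerF} is correct and is exactly the kind of reading-off the paper intends (it offers no separate argument, calling Corollary~\ref{consecuenciasF} a straightforward consequence of Theorems~\ref{chf2pm}--\ref{decirbasicos}): the torsion subgroups are elementary abelian by Theorem~\ref{descripcionordenada}, the basis of Theorem~\ref{decirbasicos} consists of monomials in $x_2,y_2,z_3$, and modulo $\mathrm{Im}(\alpha^*)$ the relations~(\ref{lasrelsevens}) and~(\ref{lasrelsoddss}) reduce everything to integer multiples of the $w$-class, the key point being, as you note, that $w_my_2=-x_2^tz_3$ already lies in $\mathrm{Im}(\alpha^*)$.

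The one step you should not present as free is the opening claim of part~\ref{factkerF}, namely that $\mathrm{Ker}(\alpha^*)$ \emph{is} the ideal generated by ``the relations of the target that do not already hold in the source.'' That identification is precisely the sharper statement the paper records separately right after the corollary and defers to~\cite{v1}; it does not follow formally from the presentation in Theorem~\ref{chf2pm}, since a priori the relations involving $w$ could combine to produce additional $w$-free elements of the kernel. Fortunately the corollary needs much less, and your degree observation already contains the fix: every generator of the defining ideal in Theorem~\ref{chf2pm} other than those of~(\ref{relacionesenteras}) is homogeneous of degree strictly greater than $m$ (degrees $2t+2$ and $2t+3$ in~(\ref{comunes}); $4t$, $m+1$ and at least $2m+1$ in~(\ref{lasrelsevens}); at least $m+2$ in~(\ref{lasrelsoddss})), so a homogeneous element of the ideal of degree at most $m$ is a combination of the generators in~(\ref{relacionesenteras}) alone, and if it is a polynomial in $x_2,y_2,z_3$ (take the part of $w$-degree zero of such a combination) it already vanishes in $H^*(\P^\infty\times\P^\infty)$; hence $\alpha^*$ is injective in degrees at most $m$. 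Alternatively, and even more in the spirit of your part~\ref{factepiF}, in positive degrees at most $m$ the source is spanned by the monomials $x_2^iy_2^jz_3^\varepsilon$ with $\varepsilon\leq1$, and each of these maps to a distinct member of the basis of Theorem~\ref{decirbasicos}, which gives injectivity at once. With either patch your argument is complete and follows the route the paper has in mind.
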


Corollary~\ref{consecuenciasF}.\ref{factkerF} can be stated in more precise terms: Ker$(\alpha^*)$ is the ideal of $H^*(\P^\infty\times\P^\infty)$ generated by the right-hand-side terms of the equations in~(\ref{comunes})--(\ref{lasrelsoddss}). Earlier versions of this paper (available as~\cite{v1}) interpret the latter fact in terms of Fadell-Husseini's index theory. The proofs of Theorems~\ref{chf2pm}--\ref{decirbasicos} rely on first establishing the first two assertions of Corollary~\ref{consecuenciasF} through a Bockstein spectral sequence argument.

\medskip
Next we focus on $B(\P^m,2)$. Recall the following three facts about the dihedral group $D_8$ of order 8 (see for instance~\cite{handeltohoku}). The ring $H^*(D_8)$ is generated over the integers by four classes $a_2$, $b_2$, $c_3$, and $d_4$ subject only to the six relations
\begin{equation}\label{relacionesenterasenD8}
2a_2=0,\;\;\;\;2b_2=0,\;\;\;\;2c_3=0,\;\;\;\;4d_3=0,\;\;\;\;b_2^2+a_2b_2=0,\;\;\;\mbox{and}\;\;\;\;c_3^2+a_2d_4=0.
\end{equation}
The $\F2$-algebra $H^*(D_8;\F2)$ is generated by three classes $u_1$, $v_1$, $w_2$ subject only to
\begin{equation}\label{relacionesmod2enD8}
u_1^2=u_1v_1.
\end{equation}
The mod 2 reduction map $\rho\colon H^*(D_8)\to H^*(D_8;\F2)$ is characterized by 
\begin{equation}\label{losvaloresdelareductionB}
\rho(a_2)=v_1^2,\;\;\;\;\rho(b_2)=u_1v_1,\;\;\;\;\rho(c_3)=v_1w_2,\,\;\;\;\mbox{and}\;\;\;\;
\rho(d_4)=w_2^2. 
\end{equation}
We also use the notation $a_2$, $b_2$, $c_3$, and $d_4$ (with integral coefficients), as well as $u_1$, $v_1$, and $w_2$ (with mod 2 coefficients) for the images of the corresponding classes under the map
\begin{equation}\label{obvious-inclusion-B}
\beta\colon B(\P^m,2)\to BD_8
\end{equation}
that classifies the following action (cf.~\cite[Proposition~2.6]{handel68}):

\begin{definicion}\label{inicio1Handel}{\em
In the usual wreath product extension $1\to\Z2\times\Z2\to D_8\to\Z2\to1$, let $\rho_1,\rho_2\in D_8$ be the obvious generators of the normal subgroup $\Z2\times\Z2$, and let (the class of) $\rho\in D_8$ generate the quotient group $\Z2$ so that, via conjugation, $\rho$ switches $\rho_1$ and $\rho_2$. $D_8$ acts freely on the Stiefel manifold $V_{m+1,2}$ of orthonormal $2$-frames in $\mathbb{R}^{m+1}$ by setting $\,\rho(v_1,v_2)=(v_2,v_1)$, $\,\rho_1(v_1,v_2)=(-v_1,v_2)$ and $\,\rho_2(v_1,v_2)=(v_1,-v_2)$, so that the orbit space $V_{m+1,2}/D_8$ is contained in $B(\P^m,2)$ as a strong deformation retract.
}\end{definicion}

\begin{teorema}\label{chb2pm} 
Let $m=2t+\delta$, $\delta\in\{0,1\}$ and, for $r\geq0$, consider the elements $$\sigma_{2r}=\!\!\!\!\sum_{\mbox{\scriptsize$\begin{array}{c}i,j\geq0\\i+2j=r\end{array}$}}\!\!\!\!\!\binom{i+j}{j}\hspace{.3mm}a_2^id_4^j\qquad\mbox{and}\qquad \iota_{2r}=\begin{cases}2d_4^{\frac{r}2},&\mbox{if $\,r$ is even;}\\0,&\mbox{if $\,r$ is odd;}
\end{cases}$$ in $H^*(B(\P^m,2))$.
The following relations hold in $H^*(B(\P^m,2))\hspace{-1mm}:$
\begin{equation}\label{comunesB}
a_2\sigma_{2t}=0,\;\;\;\;b_2\sigma_{2t}+\iota_{2t+2}=0,\;\;\;\mbox{and}\;\;\;\;c_3\sigma_{2t}=0.
\end{equation}
\begin{itemize}
\item[{\em(a)}] If $\delta=0$, the integral cohomology ring $H^*(B(\P^m, 2))$ is generated by $a_2$, $b_2$, $c_3$, $d_4$, and a class $e_{2m-1}$ subject only to the relations~\emph{(\ref{relacionesenterasenD8}), (\ref{comunesB}),} and \begin{equation}\label{lasrelsevensB}
c_3\sigma_{2t-2}=0,\;\;\;\;b_2d_4\sigma_{2t-2}+\iota_{2t+4}=0,\;\;\;\;d_4^t=0,\;\;\;\mbox{and}\;\;\;\;e_{2m-1}\mu=0,
\end{equation}
for $\mu\in\{a_2,b_2,c_3,,d_4,e_{2m-1}\}$.
\item[{\em(b)}] If $\delta=1$, the integral cohomology ring $H^*(B(\P^m, 2))$ is generated by $a_2$, $b_2$, $c_3$, $d_4$, and a class $e_m$ subject only to the relations~\emph{(\ref{relacionesenterasenD8}), (\ref{comunesB}),}
\begin{equation}\label{lasrelsoddssB}
a_2\sigma_{2t+2}=0,\;\;b_2\sigma_{2t+2}+\iota_{2t+4}=0,\;\;c_3\sigma_{2t+2}=0,\;\;d_4^{\,t+1}=0,
\end{equation}
\begin{equation}\label{lasrelsoddssBenteras}
e_m^2=0,\;\;\mu e_m=\kappa b_2^{\kappa}c_3d_4^{\,\ell},\;\;c_3e_m=\eta d_4^{\,\ell+1},\;\;\mbox{and}\;\;\,d_4e_m=\sum_{i=1}^{\ell}\binom{t-i}{i-1}a_2^{t-2i}b_2c_3d_4^{\,i}.
\end{equation}
Here $\mu\in\{a_2,b_2\}$, $t=2\ell+\kappa$ with $\kappa\in\{0,1\}$, and $\eta=b_2$ if $\kappa=1$, whereas $\eta=2$ if $\kappa=0$, except perhaps for $m=5$.
\end{itemize}
\end{teorema}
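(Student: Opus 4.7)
The plan is to mirror the strategy used in the proof of Theorem~\ref{chf2pm}. My first goal is to establish the analog of Corollary~\ref{consecuenciasF} for the map $\beta$ of~(\ref{obvious-inclusion-B}): that $\beta^{*}$ surjects in positive dimensions onto the torsion subgroup of $H^{*}(B(\P^m,2))$, has cokernel generated by a single integral class (the promised $e_{2m-1}$ if $m$ is even, $e_m$ if $m$ is odd), and is injective through the relevant range. Once this is in hand, $H^{*}(B(\P^m,2))$ is generated by $a_2,b_2,c_3,d_4$ and $e$, and the relations in~(\ref{comunesB}), (\ref{lasrelsevensB}) and~(\ref{lasrelsoddssB})---none of which involve $e$---are precisely the new relations forced by $\ker\beta^{*}$. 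Each of them is to be verified in the spectral sequence below by checking that the corresponding class on the $E_{\infty}$ page vanishes.

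The core machinery is the Serre spectral sequence of the Borel fibration $V_{m+1,2}\to V_{m+1,2}/D_8\to BD_8$, available by the strong deformation retract of Definition~\ref{inicio1Handel}. I would first compute mod~$2$, exploiting the $\F2$-algebra structure of $H^{*}(D_8;\F2)$ in~(\ref{relacionesmod2enD8}) and using Steenrod operations to pin down the mod~$2$ differentials (this essentially recovers Handel's calculation). To pass to integral coefficients, I would run the Bockstein spectral sequence starting from $E_1=H^{*}(B(\P^m,2);\F2)$ with $d_1=\sq1$, whose higher differentials detect the $4$-torsion contributed by $d_4$ (recall from~(\ref{relacionesenterasenD8}) that $d_4$ has order~$4$ in $H^{*}(D_8)$). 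Combined with~(\ref{losvaloresdelareductionB}), this yields the additive structure, identifies a unique integral generator $e$ in the expected degree, and establishes surjectivity of $\beta^{*}$ onto torsion in positive dimensions. Minimality of the relations is then a matter of comparing basis counts dimension by dimension against the Bockstein output.

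For the products involving $e_m$ in~(\ref{lasrelsoddssBenteras}), I would apply Poincar\'e duality on the closed manifold $V_{m+1,2}/D_8$, which is orientable when $m$ is odd (the $D_8$-action preserves the orientation of $V_{m+1,2}$ because $m+1$ is then even). Cup product with $e_m$ becomes an isomorphism onto the top-degree cohomology, so each product $e_m\mu$ or $d_4 e_m$ is pinned down by recognizing the Poincar\'e dual of $\mu$ (respectively of $d_4$) inside the torsion subgroup already computed. The main obstacle, I expect, will be determining the coefficient $\eta$ in $c_3 e_m=\eta d_4^{\,\ell+1}$: it toggles between $b_2$ and $2$ with the parity of $t=2\ell+\kappa$, and the acknowledged possible exception at $m=5$ strongly suggests that an extension problem in the Bockstein spectral sequence resolves anomalously when $\ell+1$ is small. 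I would settle this either by a refined tracking of $4$-torsion in low degrees or, failing that, by an ad hoc cell-level computation on $V_{6,2}/D_8$.
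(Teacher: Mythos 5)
Your additive plan (Handel's mod~2 computation plus the Bockstein spectral sequence, then a dimension-by-dimension count to get minimality) does match the paper's route to Theorems~\ref{aditivoB} and~\ref{baseB}. But there are two genuine gaps in how you propose to get the \emph{relations}. First, the relations containing the terms $\iota_{2t+2}$, $\iota_{2t+4}$ assert that certain products equal $2d_4^{\,j}$, a nonzero element of a $\mathbb{Z}_4$-summand; such a statement is invisible to any mod~2 page. Checking that ``the corresponding class vanishes on $E_\infty$'' of a mod~2 (Serre or Bockstein) spectral sequence can at best show that the product reduces to zero mod~2, i.e.\ that it is $0$ \emph{or} $2d_4^{\,j}$ --- it cannot decide which, and deciding which is exactly the content of these relations. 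The paper resolves this with genuinely integral input: the Bockstein long exact sequence diagram~(\ref{strategia}) together with Handel's explicit integral connecting homomorphism for $BD_8$ coming from the Wall--Hamada resolution (Lemma~\ref{handelauxi}), and even then one case ($b_2\sigma_2+\iota_4=0$ for $m=3$) escapes the method and needs the external fact $b_2^2=2d_4$ from \cite{taylor}. Your proposal contains no substitute for this integral step.

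Second, the Poincar\'e duality argument for the $e_m$-products fails at the start: for odd $m$ the manifold $V_{m+1,2}/D_8\simeq B(\P^m,2)$ is \emph{not} orientable --- its top integral cohomology is $\mathbb{Z}_2$ (e.g.\ $H^5(B(\P^3,2))=\mathbb{Z}_2$; see Theorem~\ref{aditivoB} with $4a+b=4t+1$), so there is no integral fundamental class and no untwisted duality pairing. Moreover, the products $\mu e_m$, $c_3e_m$, $d_4e_m$ live in degrees $m+2,\,m+3,\,m+4$, far below the top degree, so ``cup product with $e_m$ is an isomorphism onto top-degree cohomology'' is not the relevant statement; to exploit duality you would need the torsion linking pairing with twisted coefficients, which presupposes most of the ring structure you are trying to compute. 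The paper instead pins down the mod~2 reduction of $e_m$ explicitly (equation~(\ref{red1yeah}), again via exactness in~(\ref{strategia})), verifies the $e_m$-relations after applying $\rho$ (where $\rho$ is monic on torsion away from dimensions divisible by~4), and settles the residual ambiguity $c_3e_m=\eta d_4^{\,\ell+1}$ for $m\equiv1\bmod4$, $m\neq5$, by multiplying by $d_4$ and invoking Lemma~\ref{reltionskdanbaseB}; your suggestion of a ``refined tracking of 4-torsion'' or a cell-level computation is not an argument. As written, the proposal would establish the additive results but not the presentation claimed in Theorem~\ref{chb2pm}.
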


For $m=5$, it is natural to expect $\eta=2$ in the product $c_3e_5$ appearing in~(\ref{lasrelsoddssBenteras}). Our methods assure, in any case, $\eta\in\{0,2\}$. For $m=3$, the third relation in~(\ref{lasrelsoddssBenteras}) gives $c_3e_3=b_2d_4$, a trivial element in view of Theorem~\ref{aditivoB} below---more explicitly, one can use~Lemma~\ref{reltionskdanbaseB}.\ref{nadauxiliar} in the final section of the paper. Except for the latter situation, the right hand side of each relation in~(\ref{lasrelsoddssBenteras}) is in `reduced' form, as follows from Theorem~\ref{baseB} below. In fact, the relations listed in Theorem~\ref{chb2pm} are minimal for $m\geq3$, and lead to explicit descriptions of  cohomology groups (Theorem~\ref{aditivoB} next) and minimal generators for torsion subgroups (Theorem~\ref{baseB} following).

\begin{teorema}\label{aditivoB}
Let $0\leq b\leq3$. For $t\geq1$,

$$
H^{4a+b}(B(\P^{2t},2)) = \begin{cases}
\mathbb{Z}, & 4a+b=0 \mbox{ ~or~ } 4a+b=4t-1;\\
\{2a\}, & b=0<a,\;\,4a+b\leq2t;\\
\left\langle2a\right\rangle, & b=1,\;\,4a+b\leq2t;\\
\left\langle2a+2\right\rangle, & b=2,\;\,4a+b\leq2t;\\
\left\langle2a+1\right\rangle, & b=3,\;\,4a+b\leq2t;\\
\{2t-2a\}, & b=0,\;\,2t<4a+b<4t-1;\\
\langle2t-2a-1\rangle, & b=1,\;\,2t<4a+b<4t-1;\\
\langle2t-2a\rangle, & b=2,\;\,2t<4a+b<4t-1;\hspace{11mm}\\
\langle2t-2a-2\rangle, & b=3,\;\,2t<4a+b<4t-1;\\
0, & \mbox{otherwise}.
\end{cases}
$$

\noindent For $t\geq0$,

\vspace{-8mm}$$
H^{4a+b}(B(\P^{2t+1},2)) = \begin{cases}
\mathbb{Z}, & 4a+b=0;\\
\{2a\}, & b=0<a,\;\,4a+b\leq2t;\\
\left\langle2a\right\rangle, & b=1,\;\,4a+b\leq2t;\\
\left\langle2a+2\right\rangle, & b=2,\;\,4a+b\leq2t;\\
\left\langle2a+1\right\rangle, & b=3,\;\,4a+b\leq2t;\\
\mathbb{Z}\oplus\langle t\rangle, & 4a+b=2t+1;\\
\{2t-2a\}, & b=0,\;\,2t+1<4a+b\leq4t+1;\\
\langle2t-2a+1\rangle, & b=1,\;\,2t+1<4a+b\leq4t+1;\\
\langle2t-2a\rangle, & b\in\{2,3\},\;\,2t+1<4a+b\leq4t+1;\\
0, & \mbox{otherwise}.
\end{cases}
$$
\end{teorema}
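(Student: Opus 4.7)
The plan is to read off the additive structure of $H^*(B(\P^m,2))$ in each degree from the ring presentation in Theorem~\ref{chb2pm}, by enumerating monomials in a normal form modulo the stated relations and identifying the order of each cyclic summand.

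The first step is to reduce every monomial in the subring generated by $a_2,b_2,c_3,d_4$ to the normal form $a_2^i b_2^\epsilon c_3^\delta d_4^j$ with $\epsilon,\delta\in\{0,1\}$ and $i,j\geq0$; this is forced by the relations $b_2^2+a_2 b_2=0$ and $c_3^2+a_2 d_4=0$ from~(\ref{relacionesenterasenD8}). A direct count of solutions of the degree equation $2i+2\epsilon+3\delta+4j=4a+b$ yields $2a+1,$ $2a,$ $2a+2,$ $2a+1$ such monomials for $b=0,1,2,3$ respectively, matching the $\mathbb{F}_2$-ranks of the claimed groups.

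The second step separates $2$- from $4$-torsion. The generators $a_2,b_2,c_3$ satisfy $2a_2=2b_2=2c_3=0$, while $d_4$ has order exactly $4$, so the only potential cyclic summand of order $4$ in a given degree is produced by a surviving pure power $d_4^a$; all remaining normal-form monomials generate $\mathbb{Z}_2$-summands. Consequently, in the range $n\leq 2t$, where no relations beyond those of $H^*(D_8)$ are yet active (since $\sigma_{2t}$ already lives in degree $2t$), the normal-form count is sharp and produces $\{2a\}$ in degree $4a$ together with $\langle 2a\rangle,\,\langle 2a+2\rangle,\,\langle 2a+1\rangle$ in degrees $4a+1,\,4a+2,\,4a+3$.

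The third step addresses degrees $n>2t$, where the truncation relations~(\ref{comunesB})--(\ref{lasrelsoddssB}) come into play. Each such relation is a $\mathbb{Z}/4$-linear combination of normal-form monomials with binomial weights $\binom{i+j}{j}$ coming from $\sigma_{2r}$. I compute the rank of the resulting linear system over $\mathbb{Z}/4$ and verify that the number of independent cyclic summands decays in a pattern symmetric to the one below $n=2t$, yielding $\{2t-2a\}$, $\langle 2t-2a-1\rangle$, $\langle 2t-2a\rangle$, $\langle 2t-2a-2\rangle$ in residue classes $b=0,1,2,3$. The torsion-free class $e_{2m-1}$ (when $m=2t$) provides the $\mathbb{Z}$-summand in top degree $4t-1$ and is killed by every other generator via~(\ref{lasrelsevensB}); in the odd case, $e_m$ contributes the extra $\mathbb{Z}$-summand in middle degree $2t+1$, with its products with the other generators re-entering the normal-form basis via~(\ref{lasrelsoddssBenteras}).

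The main obstacle will be the rank computation in the range $n>2t$: several truncation relations of different degrees must be handled simultaneously, and one must track carefully how powers of $d_4$ (which contribute $\mathbb{Z}_4$-summands) interact with the $2$-torsion monomials when reducing by $\sigma_{2r}$-relations. This combinatorial bookkeeping is the technical heart of the argument and is intertwined with the proof of Theorem~\ref{baseB}, whose explicit $\mathbb{F}_2$-bases for the torsion subgroups will confirm the numerical predictions asserted here.
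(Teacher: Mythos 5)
Your proposal inverts the paper's logical order, and in the paper's framework that inversion is circular. You propose to \emph{derive} Theorem~\ref{aditivoB} from the presentation in Theorem~\ref{chb2pm}, but the paper proves the additive statement first, by a Bockstein spectral sequence analysis of $H^*(B(\P^m,2);\F2)$ (Lemma~\ref{chb2pmmod2}, Corollary~\ref{basemod2B}, the auxiliary filtration by powers of $w_2$, and the count of $\Sq^1$- and $\beta_2$-images), and only afterwards establishes the ring presentation: the completeness of the relations (Theorem~\ref{baseB}) is proved by comparing the candidate generators~(\ref{gendoresenterosB}) against the group sizes already known from Theorem~\ref{aditivoB}, and even the verification that several relations hold at all uses additive input (e.g.\ the argument for $b_2^2=2d_4$ in $H^4(B(\P^3,2))$ invokes the fact that this group is cyclic of order $4$, and the reduction of~(\ref{cincorestantes}) to~(\ref{AprimeraA})--(\ref{BsegundaB}) uses that certain groups of $B(\P^{2t},2)$ and $B(\P^{2t+1},2)$ agree, again by Theorem~\ref{aditivoB}). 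So unless you supply an independent proof that the relations listed in Theorem~\ref{chb2pm} are \emph{all} the relations --- which is exactly the hard part --- you cannot use that theorem as the starting point for the additive computation. Knowing that the relations \emph{hold} only bounds the groups from above; the BSS (or some substitute, such as the Cartan--Leray argument of~\cite{v1}) is what provides the matching lower bound.

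Even granting Theorem~\ref{chb2pm} as stated, the decisive step of your argument is not carried out. In degrees $\leq 2t$ your monomial count is fine (all extra relations live in degrees $>2t$), but in the range $2t<n<4t-1$ the entire content of the theorem is the rank of the system of relations generated by~(\ref{comunesB})--(\ref{lasrelsoddssB}) and their multiples, over $\mathbb{Z}$ rather than over $\F2$: the terms $\iota_{2r}=2d_4^{r/2}$ mix the order-$4$ classes $d_4^{\,j}$ with the $2$-torsion monomials, and this is precisely what decides whether a given degree carries $\mathbb{Z}_4$ or $\langle2\rangle$ (already for $m=3$, degree $4$, the three normal-form monomials $a_2^2$, $a_2b_2$, $d_4$ collapse to a single $\mathbb{Z}_4$ via $a_2\sigma_2=0$ and $b_2\sigma_2+\iota_4=0$). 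You acknowledge this bookkeeping as ``the technical heart'' but only assert its outcome; in the paper the analogous bookkeeping is done concretely, via the relations $R_{t,s}$ of Lemma~\ref{reltionskdanbaseB} and the dimension count of Example~\ref{elexale}, \emph{after} the groups are known. As written, your proposal therefore has two genuine gaps: the unproved (and, relative to the paper, circularly sourced) completeness of the presentation, and the omitted rank computation above the middle dimension.
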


\begin{teorema}\label{baseB}
Let $m=2t+\delta$ with $\delta\in\{0,1\}$. A minimal set of generators for the torsion subgroups of $H^*(B(\P^m,2))$ is given by the monomials
\begin{equation}\label{gendoresenterosB}
\mbox{$a_2^ib_2^\varepsilon d_4^j\,$ (in even dimensions) and $\;a_2^ib_2^\varepsilon c_3d_4^j\,$ (in odd dimensions)}
\end{equation}
where $\,\varepsilon\in\{0,1\}$, $\,i,j\geq0$, $\;j\leq t+\delta-1$, and 
\begin{itemize}
\item $1\leq i+j+\varepsilon\leq t\,$ in even dimensions;
\vspace{-3mm}
\item $i+j+1<t+\delta\,$ in odd dimensions (note that this condition is independent of $\varepsilon$).
\end{itemize}
\end{teorema}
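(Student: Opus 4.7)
The plan is to combine the ring presentation in Theorem~\ref{chb2pm} with the additive description in Theorem~\ref{aditivoB}: I will show first that the monomials in~(\ref{gendoresenterosB}) span the torsion subgroup of $H^*(B(\P^m,2))$, and second that, in each cohomological degree, the cardinality of the list agrees with the minimum number of generators required by Theorem~\ref{aditivoB}. A group of the form $\langle k\rangle$ (resp.~$\{k\}$) requires exactly $k$ (resp.~$k+1$) generators, and the pure powers $d_4^{\,j}$ are the only monomials in~(\ref{gendoresenterosB}) of additive order~$4$, so matching cardinalities in each degree will force minimality automatically.

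Spanning part: every class in $H^*(B(\P^m,2))$ is a $\mathbb{Z}$-linear combination of monomials in the ring generators $a_2, b_2, c_3, d_4$ together with the torsion-free class $e_{2m-1}$ (when $\delta=0$) or $e_m$ (when $\delta=1$). The relations $b_2^2+a_2b_2=0$ and $c_3^2+a_2d_4=0$ from~(\ref{relacionesenterasenD8}) reduce every such monomial to the form $a_2^ib_2^\varepsilon c_3^{\varepsilon'}d_4^{\,j}$ with $\varepsilon,\varepsilon'\in\{0,1\}$. The relations of Theorem~\ref{chb2pm} which express $\mu\cdot e_{2m-1}$ (resp.~$\mu\cdot e_m$) as a monomial in $a_2, b_2, c_3, d_4$ show that no nontrivial product involving the top class contributes new torsion, while the top class itself accounts for the torsion-free $\mathbb{Z}$-summand in Theorem~\ref{aditivoB}. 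The bound $j\leq t+\delta-1$ follows at once from $d_4^{\,t+\delta}=0$. The bound on $i+j+\varepsilon$ comes from iterating the $\sigma$-relations: from $a_2\sigma_{2t}=0$ one extracts the replacement rule $a_2^{t+1}=-\binom{t-1}{1}\,a_2^{t-1}d_4-\binom{t-2}{2}\,a_2^{t-3}d_4^{\,2}-\cdots$, which lowers the $a_2$-exponent at the cost of raising the $d_4$-exponent; the companion relations $b_2\sigma_{2t}+\iota_{2t+2}=0$, $c_3\sigma_{2t}=0$, and the $\sigma_{2t\pm2}$-relations in~(\ref{lasrelsevensB})--(\ref{lasrelsoddssB}) handle the $b_2$- and $c_3$-cases. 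A lexicographic descent on the pair $(i,-j)$ then shows that every out-of-range monomial is, after finitely many substitutions, a $\mathbb{Z}$-combination of listed monomials.

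Counting and obstacle: in each degree $d$, parametrise the listed monomials by $d=2i+2\varepsilon+4j$ (even case) or $d=2i+2\varepsilon+4j+3$ (odd case) under the size conditions of the theorem, and verify, case by case on $d\bmod 4$ and on the parity of $m$, that the count coincides with the rank plus any $\mathbb{Z}_4$-contribution extracted from Theorem~\ref{aditivoB}. The principal obstacle is the bookkeeping in the reduction step: one has to check that each chain of $\sigma_{\cdot}$-substitutions actually terminates inside the listed monomial set rather than producing stray $d_4^{\,t+\delta}$ factors or $c_3$-monomials with too-large $i+j$, so the ordering argument must be tailored separately to the two parities of $m$. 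A secondary difficulty is the $\eta$-ambiguity for $m=5$ in~(\ref{lasrelsoddssBenteras}); both admissible values of $\eta$ place $c_3 e_m$ inside the $\mathbb{Z}_4$-summand generated by $d_4^{\,\ell+1}$, so the list~(\ref{gendoresenterosB}) remains the correct minimal generating set regardless of how that ambiguity is resolved.
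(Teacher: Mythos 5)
Your overall route is the paper's: show that the monomials in~(\ref{gendoresenterosB}) span the torsion subgroups by reducing against the relations, then check degree by degree that the size of the list equals the minimal number of generators read off from Theorem~\ref{aditivoB} (so that spanning forces minimality); your observation that only the pure powers $d_4^{\,j}$ can carry $\mathbb{Z}_4$-summands is consistent with this count. On the reduction step, which you rightly single out as the principal bookkeeping obstacle, the paper does not iterate the $\sigma$-relations by hand: it first closes up the iteration into the family $a_2R_{t,s}=0$, $\,b_2R_{t,s}+\iota_{2t+2s+2}=0$ ($0\leq s\leq t$) and $c_3R_{t-1+\delta,s}=0$ ($0\leq s\leq t-1+\delta$) of Lemma~\ref{reltionskdanbaseB}, obtained from~(\ref{comunesB})--(\ref{lasrelsoddssB}) via the recursion~(\ref{identificacion}). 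Since $R_{r,s}$ has leading monomial $a_2^{r-s}d_4^{\,s}$, these relations (together with $d_4^{\,t+\delta}=0$) make the descent on the $a_2$-exponent transparent, and the stray terms you worry about either vanish or are multiples of listed monomials (the $\iota$-terms are multiples of $d_4$-powers). So your lexicographic descent is the same argument in substance, just less cleanly packaged.

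The one genuine issue is your treatment of monomials involving the torsion-free class. For odd $m$ you invoke the products in~(\ref{lasrelsoddssBenteras}) to argue that $\mu e_m$ contributes no new torsion generators; but in the paper those relations are verified only \emph{after} Theorem~\ref{baseB}, and their verification uses it (both the reduced form of their right-hand sides and the normalization~(\ref{red1yeah}) of $e_m$ rest on Theorem~\ref{baseB} and Corollary~\ref{basemod2B}). Spliced into the paper's development, your spanning step is therefore circular in the odd case. The paper sidesteps $e_m$ altogether by quoting Corollary~\ref{suprayeccionB}.\ref{factepiB}: the torsion subgroups in positive dimensions lie in the image of $\beta^*$, a fact already established by the Bockstein analysis of Section~\ref{ahoralaB}, so only monomials in $a_2,b_2,c_3,d_4$ need to be considered from the outset. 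The same citation repairs your argument at no cost, since $\mu e_m$ is a torsion class whenever $\mu$ is torsion and hence automatically lies in that image; with that substitution, your counting step and your remark about the $\eta$-ambiguity (which indeed does not affect generation) are exactly as in the paper.
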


The following is a straightforward consequence of the last three results.

\begin{corolario}\label{suprayeccionB}
The map induced in integral cohomology by~\emph{(\ref{obvious-inclusion-B}):}
\emph{\begin{enumerate}
\vspace{-2mm}
\item\label{factepiB}\emph{surjects in positive dimensions onto the torsion subgroups of $H^*(B(\P^m,2));$}
\vspace{-3.2mm}
\item\label{factcokerB}\emph{has cokernel generated by $e_m$ (when $m$ is odd) and $e_{2m-1}$ (when $m$ is even)\emph{;}}
\vspace{-3.2mm}
\item\label{factkerB}\emph{is injective in dimensions at most $m$.}
\end{enumerate}}
\end{corolario}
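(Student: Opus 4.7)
My plan is to read all three assertions directly off the ring presentation in Theorem~\ref{chb2pm}, with additional input from Theorem~\ref{baseB}.

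For part~(\ref{factepiB}), I note that the generators of the torsion subgroups listed in Theorem~\ref{baseB} are all monomials in $a_2, b_2, c_3, d_4$. By the convention following~(\ref{losvaloresdelareductionB}), each of these classes is the $\beta^*$-image of the homonymous generator of $H^*(BD_8)$, so the torsion in positive dimensions lies in $\mathrm{Im}(\beta^*)$.

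For part~(\ref{factcokerB}), I work from the five-generator presentation of Theorem~\ref{chb2pm}, writing $e$ for $e_{2m-1}$ or $e_m$ according to the parity of $m$. Every monomial in $\{a_2, b_2, c_3, d_4, e\}$ falls into one of three cases: (i) it is $e$-free, hence lies in $\mathrm{Im}(\beta^*)$; (ii) it contains $e^2$, hence vanishes (the relation $e_{2m-1}\cdot e_{2m-1}=0$ is included in~(\ref{lasrelsevensB}), and $e_m^2=0$ is stated in~(\ref{lasrelsoddssBenteras})); or (iii) it has the form $e\cdot P$ with $P$ a non-constant pure monomial, in which case the relations~(\ref{lasrelsevensB}) (even $m$) or~(\ref{lasrelsoddssBenteras}) (odd $m$) rewrite $e\cdot P$ as either zero or a pure monomial. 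Thus, modulo $\mathrm{Im}(\beta^*)$, only the class of $e$ survives, so the cokernel is generated by $e$.

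For part~(\ref{factkerB}), I identify $\ker(\beta^*)$ as the ideal of $H^*(BD_8)$ generated by those relations in Theorem~\ref{chb2pm} that are $e$-free and not already among the six relations~(\ref{relacionesenterasenD8}) defining $H^*(BD_8)$. By inspection these are~(\ref{comunesB}), together with the first three equations of~(\ref{lasrelsevensB}) when $m$ is even, and the relations~(\ref{lasrelsoddssB}) when $m$ is odd. Each of these pure relations lies in degree at least $m+1$, so $\ker(\beta^*)$ vanishes in dimensions $\leq m$. The main point to check with care is that the $e$-involving relations in~(\ref{lasrelsevensB}) and~(\ref{lasrelsoddssBenteras}) do not secretly encode a low-degree $e$-free identity on $\{a_2, b_2, c_3, d_4\}$; since each such relation has the shape $e\cdot(\text{generator})=(\text{pure monomial})$ with $e$ appearing linearly on one side only, no such hidden identity can be produced.
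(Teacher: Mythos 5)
Your derivation is correct and is essentially the paper's own route: the paper gives no argument for Corollary~\ref{suprayeccionB} beyond declaring it a straightforward consequence of Theorems~\ref{chb2pm}--\ref{baseB}, and reading parts~(1) and~(2) off the monomial generators of Theorem~\ref{baseB} and the $e$-relations in Theorem~\ref{chb2pm} is exactly that. One caveat on your part~(3): the closing claim that the $e$-involving relations ``cannot produce a hidden $e$-free identity'' is not valid as stated---relations of the form $e\cdot(\mbox{generator})=(\mbox{pure class})$ certainly do imply pure identities, e.g.\ comparing $c_3(a_2e_m)$ with $a_2(c_3e_m)$ in~(\ref{lasrelsoddssBenteras}) yields $\eta\hspace{.3mm}a_2d_4^{\,\ell+1}=\kappa\hspace{.3mm}b_2^{\kappa}c_3^2d_4^{\,\ell}$, and deciding whether such consequences already lie in the ideal generated by the pure relations would require a separate check. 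Fortunately none of this is needed for the statement: every relation in Theorem~\ref{chb2pm} beyond the six in~(\ref{relacionesenterasenD8}) is homogeneous of degree at least $m+1$ (the mixed ones have degree at least $m+2$), so any homogeneous element of the full relation ideal, pure or not, vanishes in degrees at most $m$; this degree count, which you already invoke for the pure relations, is the correct and complete justification of injectivity in dimensions $\leq m$. Finally, keep in mind that in the paper's internal logic parts~(1) and~(2) are first obtained from the Bockstein spectral sequence analysis in Section~\ref{ahoralaB} and are then used in proving Theorems~\ref{chb2pm} and~\ref{baseB}; deducing them from those theorems is legitimate for the corollary as stated, but it could not substitute for the spectral-sequence step inside the paper's development.
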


Note that $\bpm$ and $\fpm$ become homology spheres after inverting 2. Such a fact holds integrally in the case of $B(\P^1,2)$ and $F(\P^1,2)$. Indeed, there are well-known homotopy equivalences
\begin{equation}\label{topologicalfacts}
F(\P^1,2)\simeq S^1\simeq B(\P^1,2)
\end{equation}
(cf.~\cite[Example~2.2]{SadokCohenFest}). Since our descriptions of the integral cohomologies of $F(\P^1,2)$ and $B(\P^1,2)$ are compatible with~(\ref{topologicalfacts}), we will assume $m>1$ in Sections~\ref{HF}--\ref{HBring}. The case of $\P^2$ is the only further situation where the ring structure is trivial integrally---with $z_3=0$ for $H^*(F(\P^2,2))$, and $c_3=d_4=0$ for $H^*(B(\P^2,2))$.

\medskip
Our results can be coupled with the Universal Coefficient Theorem, expressing homology in terms of cohomology, to give an explicit description of the integral homology groups of $F(\P^m,2)$ and $B(\P^m,2)$. Likewise, in combination with Poincar\'e duality (in its not necessarily orientable version, cf.~\cite[Theorem~3H.6]{hatcher} or~\cite[Theorem~4.51]{ranicki}), our results lead to explicit descriptions of the $w_1$-twisted homology and cohomology groups of $F(\P^m,2)$ and $B(\P^m,2)$. Details are given in~\cite{v1}. 

\medskip
Theorems~\ref{chb2pm}--\ref{baseB} fully extend the calculations of $H^i(\bpm)$ given in~\cite{bausum} for $i$ close to the top cohomological dimension $2m-1$. Bausum's work led to a description of the sets of isotopy classes of smooth embeddings of $\P^m$ in $\mathbb{R}^{2m-e}$ for low values of $e$ (as low as $e\leq2$). Similar results were obtained by Larmore and Rigdon (note the implicit hypothesis $m>3$ in~\cite[Section~4]{LR})\footnote{We thank Sadok Kallel for pointing out the results in~\cite{bausum} and~\cite{LR}.}. Instead, our $\TC^S$-application follows the method outlined in~\cite{taylor}.

\medskip
Our original (additive) approach to $H^*(B(\P^m,2))$ and $H^*(F(\P^m,2))$ was based on the Cartan-Leray spectral sequence of the $D_8$-action in Definition~\ref{inicio1Handel}, and of the restricted action to the normal subgroup $\Z2\times\Z2$. This eventually gave the ring structures (the case of $B(\P^m,2)$ is part of the Ph.D.~thesis of the first author). The Bockstein spectral sequence approach in this paper was suggested by the referee, and leads to condensed proofs---despite that we have spent quite some space giving concrete details and explicit examples of our technical arguments. However, the current gain in brevity sacrifices the geometric motivation in~\cite{v1}, replacing it by a highly technical bookkeeping of cohomology groups through very explicit generators and relations. Thus, it is worth keeping in mind that~\cite{tesiscarlos,v1} offer (and make use of) a more geometric understanding of the central role played by $D_8$. In particular,~\cite{tesiscarlos} explains how the relations~(\ref{comunes})--(\ref{lasrelsoddss}) and~(\ref{comunesB})--(\ref{lasrelsoddssBenteras}) arise naturally as key differentials in the relevant Cartan-Leray spectral sequences.

\section{Symmetric topological complexity}\label{tcsapplicacion}
We now apply the cohomological information in the previous section to the problem of computing the symmetric topological complexity ($\TC^S$) of real projective spaces. As a motivation, we begin with a description of the relationship between $\TC^S$ and the embedding dimension of these manifolds. The relevant references for the facts in the next paragraph are~\cite{taylor, symmotion}, and we assume familiarity with the notation in those papers.

\medskip
Consider the homotopy class
\begin{equation}\label{classify}
B(\P^m,2)\stackrel{u}\longrightarrow\P^\infty
\end{equation}
classifying the obvious double cover $F(\P^m,2)\to B(\P^m,2)$. 
With the seven possible exceptions\footnote{Remark~\ref{optimalidad} below observes that we can now rule out the first of these potential exceptions.} of $m$ 
explicitly described in~\cite[Equation~(8)]{taylor}, $\emb(\P^m)$---the dimension of the smallest Euclidean space in which $\P^m$ can be smoothly embedded---is 
characterized as the smallest integer $e(m)$ such that the map in~(\ref{classify}) can be homotopy compressed into $\P^{e(m)-1}$. On the other hand, the main result in~\cite{symmotion} asserts that, without restriction on~$m$, $e(m)$ agrees with Farber-Grant's symmetric topological complexity\footnote{We follow the convention in~\cite{taylor} of using the reduced version of $\TC^S$, i.e.~we choose to normalize the Schwarz genus of a product fibration $F\times B\to B$ to be $0$---not $1$.} of $\P^m$, $\TC^S(\P^m)$. The latter is an invariant proposed in~\cite{FGsymm} to measure the inherent topological difficulties in the problem of finding ``efficient'' motion algorithms in robotics. Consequently, potentially new nonembedding results for $\P^m$---as well as inherent difficulties in the problem of planning symmetric motion in $\P^m$---could be deduced from the simple observation that, for a generalized cohomology theory $h^*$ with products, every class $z\in h^*(\P^\infty)$ must satisfy
\begin{equation}\label{cuppower}
u^*(z)^{e(m)}=0.
\end{equation}
The idea actually goes back at least as far as~\cite{handel68}, where mod 2 coefficients (and obstruction theory) are used. But the $\mathbb{Z}_4$ groups appearing in Theorem~\ref{aditivoB} carry finer information not yet explored\footnote{Compare with the situation in~\cite{electron} where the topological Borsuk problem for $\mathbb{R}^3$ is studied via Fadell-Husseini index theory.}. For instance, the strategy using integral coefficients has recently been exploited in~\cite{taylor} in order to compute $\TC^S(\mathrm{SO}(3))$---identifying it as the unique obstruction in Goodwillie's embedding Taylor tower for $\P^3$. The same idea now leads to:

\begin{teorema}\label{STC}
$\TC^S(\P^5)=\TC^S(\P^6)=9$.
\end{teorema}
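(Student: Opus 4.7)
The theorem splits into an upper and a lower bound on $\TC^S(\P^m)$ for $m\in\{5,6\}$. The upper bound $\TC^S(\P^m)\le 9$ follows from the classical embeddings $\P^5,\P^6\hookrightarrow\mathbb{R}^9$ together with the identification $\TC^S(\P^m)=e(m)$ recalled in this section: an embedding $\P^m\hookrightarrow\mathbb{R}^9$ induces, by taking unordered configurations of pairs, a compression of $u\colon B(\P^m,2)\to\P^\infty$ through $B(\mathbb{R}^9,2)\simeq\P^8\hookrightarrow\P^\infty$, so $e(m)\le 9$.

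For the lower bound, the plan is to apply~(\ref{cuppower}). Using $\beta$ from~(\ref{obvious-inclusion-B}) and the characterization of the double cover $F(\P^m,2)\to B(\P^m,2)$ by the ``swap'' quotient $D_8\twoheadrightarrow\mathbb{Z}_2$, one first identifies $u^*(z)=a_2$ for $z\in H^2(\P^\infty;\mathbb{Z})$ the generator. From Theorem~\ref{chb2pm} the relation $a_2\sigma_{2t+2}=0$ (for $m=2t+1=5$) or $a_2\sigma_{2t}=0$ (for $m=2t=6$) immediately yields $a_2^4=0$, so the naive cup-length bound from~(\ref{cuppower}) with integer coefficients only forces $e(m)\ge 8$; the lower bound $e(m)\ge 9$ must therefore be pulled out of the finer $\mathbb{Z}_4$ information carried by Theorem~\ref{aditivoB}.

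This refinement---the crux of the argument and the main obstacle---follows the integral strategy of~\cite{taylor} for $\TC^S(\mathrm{SO}(3))$. I would exploit the $\mathbb{Z}_4$-detected relations in $H^*(B(\P^m,2);\mathbb{Z})$, centered on the generator $d_4^2$ of the $\mathbb{Z}_4$ summand of $H^8(B(\P^m,2);\mathbb{Z})$ (which is \emph{not} in the image of $u^*$) together with its integral products with classes supplied by Theorem~\ref{chb2pm}---for $m=5$, the relation $c_3 e_5=\eta d_4^2$, and for $m=6$, the relation $b_2d_4\sigma_{2t-2}+\iota_{2t+4}=0$---to build a secondary obstruction to a hypothetical compression of $u$ through $\P^7\hookrightarrow\P^\infty$. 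A routine-but-necessary sub-step is to confirm the value $\eta=2$ in the exceptional case $m=5$ left open just after Theorem~\ref{chb2pm}, which should follow by direct inspection of the generators supplied by Theorem~\ref{baseB}; granted that, the $\mathbb{Z}_4$ arithmetic of $d_4^2$ rules out the compression, giving $e(m)\ge 9$ and completing the proof.
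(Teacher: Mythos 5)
Your lower bound rests on a wrong identification of the class pulled back from $\P^\infty$. The map $u$ classifies the double cover $F(\P^m,2)\to B(\P^m,2)$, so its mod~2 class is $u_1$ (Lemma~\ref{laobsdehand}); hence $\rho(u^*(z_2))=u_1^2=u_1v_1=\rho(b_2)$ by~(\ref{relacionesmod2enD8}) and~(\ref{losvaloresdelareductionB}), and since $\rho$ is monic in degree $2$ one gets $u^*(z_2)=\beta^*(b_2)$, not $a_2$ --- this is exactly~(\ref{detmncn}). With the correct class no ``secondary obstruction'' is needed: the paper's argument is the primary computation $b_2^4=a_2^3b_2=a_2b_2d_4=2d_4^2$ in $H^8(B(\P^5,2))$, using the fifth relation in~(\ref{relacionesenterasenD8}), the second relation in~(\ref{comunesB}), and Lemma~\ref{reltionskdanbaseB}.\ref{nadauxiliar}, and $d_4^2$ has order $4$ by Theorems~\ref{aditivoB} and~\ref{baseB}, so $b_2^4\neq0$ and $e(5)\geq9$. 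Your alternative route is only a sketch, and it leans on establishing $\eta=2$ for $m=5$ in~(\ref{lasrelsoddssBenteras}) as a ``routine'' sub-step; but that is precisely the value the paper's methods cannot determine (only $\eta\in\{0,2\}$ is known for $m=5$), so an argument for Theorem~\ref{STC} must not depend on it --- and indeed the paper's does not.

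The upper bound for $\P^6$ is also a genuine gap: there is no known smooth (or even topological) embedding of $\P^6$ in $\mathbb{R}^9$; the best known statement is $\emb(\P^6)\in\{9,10,11\}$ (Remark~\ref{optimalidad}), so you cannot invoke a ``classical embedding'' $\P^6\hookrightarrow\mathbb{R}^9$. The paper instead quotes $e(6)\leq9$ from~\cite[Corollary~11]{reesodd}, and combines it with the monotonicity $e(5)\leq e(6)$ and the identification $\TC^S(\P^m)=e(m)$ from~\cite{symmotion}; this also reduces the whole theorem to the single inequality $e(5)\geq9$, making your separate lower-bound treatment of $m=6$ unnecessary. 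Your upper bound for $\P^5$ via $\emb(\P^5)=9$ (\cite{hopf,mahowald}) is fine, since an embedding of $\P^m$ in $\mathbb{R}^n$ always compresses $u$ into $\P^{n-1}$.
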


Before proving this result, we compare it (in Remark~\ref{optimalidad} below) with known information (summarized in~\cite{tablas}) on $\emb(\P^m)$ for $m=5,6,7$, pausing to explain the way Theorem~\ref{STC} gives an exceptional situation to some general patterns of values of $\TC^S(\P^{m})$ (see~(\ref{fam1})--(\ref{laexcepcion})).

\begin{nota}\label{optimalidad}{\em
Since $\emb(\P^5)=9$ (\cite{hopf,mahowald}), the list 
in~\cite{taylor} of seven exceptional values of $m$ for which the equality 
$\emb(\P^m)=\TC^S(\P^m)$ {\it could\hspace{.5mm}} 
fail reduces now to $\{6,7,11,12,14,15\}$. Note that $6$ is the smallest $m$ for which $\emb(\P^m)$ is 
unknown: $\emb(\P^6)\in\{9,10,11\}$ is the best assertion known to date 
(\cite{ES, mahowald}). On the other hand, Theorem~\ref{STC} obviously implies 
$\TC^S(\P^7)\geq9$, improving by 1 the previously known best lower 
bound for $\TC^S(\P^7)$ noted in~\cite[Table~1]{taylor}. In fact, 
taking into account Rees' PL embedding $\P^7\subset
\mathbb{R}^{10}$ constructed in~\cite{rees}, the above considerations imply
that both $\TC^S(\P^7)$ and $\emb_{\mathrm{PL}}(\P^7)$ lie in $\{9,10\}$. This contrasts with the best known assertion about the smooth
embedding dimension of $\P^7$, namely $\emb(\P^7)\in\{9,10,11,12\}$
(\cite{Ha,M64}).
}\end{nota}

Except for three special cases (related to the Hopf invariant one problem), the reduced version of Farber's original (non-symmetric) topological complexity captures the immersion dimension of real projective spaces: As proved in~\cite{FTY}, the equality $\Imm(\P^m)=\TC(\P^m)$ holds for $m\neq1,3,7$. However, Remark~\ref{optimalidad} suggests that the equality $\emb(\P^m)=\TC^S(\P^m)$ could actually hold for every $m$, at least if $\emb$ is interpreted as {\it topological} embedding dimension. From such a perspective, it would be highly desirable to know whether $\P^6$ topologically embeds in $\mathbb{R}^9$. On the other hand, it does not seem likely that $\P^7$ could possibly embed in 
$\mathbb{R}^9$ (even topologically), and the techniques proving 
Theorem~\ref{STC} (using perhaps a cohomology theory better suited than 
singular cohomology) might allow us to formalize our intuition---we hope 
to come back to such a point elsewhere.

\medskip
Before getting into the main technical computation of this section, we set Theorem~\ref{STC} in context. The inequality 
\begin{equation}\label{inequalityX}
\TC^S(X)-\TC(X)\geq0
\end{equation}
is proved in~\cite[Corollary~9]{FGsymm} for any space $X$. It is optimal since, as proved in~\cite{symmotion},~(\ref{inequalityX}) becomes an equality if $X$ is, for instance, a complex projective space. However, as discussed in~\cite[Example~3.3]{symmotion}, there is no current indication that the left hand side in~(\ref{inequalityX}) should even be a bounded function of $m$ for $X=\P^m$. We discuss next the known situation (as updated by Theorem~\ref{STC}) for a few particular families of $m$. We use~\cite{tablas,FTY} as the main references for the known numerical values of $\TC(\P^m)$.

\medskip
To begin with, Example~3.3 in~\cite{symmotion} observes that
\begin{equation}\label{fam1}
\TC^S(\P^{2^i})-\TC(\P^{2^i})=1
\end{equation}
for any $i\geq0$ (the case $i=0$ was not mentioned in~\cite{symmotion}, but it is covered by the calculations in~\cite{F1,FGsymm}). Example~3.3 in~\cite{symmotion} also notes that
\begin{equation}\label{fam2}
\TC^S(\P^{2^i+1})-\TC(\P^{2^i+1})=2
\end{equation}
for any $i\geq3$; the corresponding result for $i=1,2$ is also true in view of~\cite{taylor} (for $i=1$) and Theorem~\ref{STC} (for $i=2$). Lastly, Example~3.3 in~\cite{symmotion} remarks that
\begin{equation}\label{laexcepcion}
\TC^S(\P^{2^i+2})-\TC(\P^{2^i+2})=1
\end{equation}
for any $i\geq4$. Now, while~(\ref{laexcepcion}) is also true for $i=3$ (as remarked in~\cite[Table~1]{taylor}), Theorem~\ref{STC} implies that, for $i=2$,~(\ref{laexcepcion}) must be replaced by $\TC^S(\P^{6})-\TC(\P^{6})=2$.

\medskip
Returning to this section's main focus (the proof of Theorem~\ref{STC}), we take advantage of the obvious inequality $e(m)\leq e(m+1)$ and of the fact that $e(6)\leq 9$---proved in~\cite[Corollary~11]{reesodd}---to reduce the proof of Theorem~\ref{STC} to proving the inequality $e(5)\geq9$. For this purpose, since the plan is to use integral cohomology, it will be simpler to replace~(\ref{cuppower}) by the observation that any cohomology class $z_d\in H^d(\P^\infty)$ with $d\geq e(m)$ must lie in the kernel of $u^*$. Thus, Theorem~\ref{STC} is a consequence of:

\begin{teorema}\label{altura}
For $m=5$, the homomorphism on integral cohomology induced by the map in~{\em(\ref{classify})} is monic in dimension $8$.
\end{teorema}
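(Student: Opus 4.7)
The plan is to establish $u^*(\zeta^4) \ne 0$ in $H^8(B(\P^5,2))$, where $\zeta \in H^2(\P^\infty;\mathbb{Z})$ denotes the fundamental class (so that $\zeta^4$ generates $H^8(\P^\infty;\mathbb{Z}) \cong \mathbb{Z}_2$). I will factor $u$ as $u = B\pi\circ\beta$, where $\pi\colon D_8 \twoheadrightarrow \mathbb{Z}_2 = D_8/(\Z2\times\Z2)$ is the quotient by the normal subgroup, then identify $u^*(\zeta) \in H^2(B(\P^5,2))$, and finally compute $u^*(\zeta)^4$ using the explicit presentation in Theorem~\ref{chb2pm}(b).

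First, I would identify $u^*(\zeta)$ by showing it has a nontrivial $b_2$-component. The class $(B\pi)^*(z_1) \in H^1(BD_8;\F2)$ corresponds to the homomorphism $\pi$ and therefore restricts to zero in $H^1(B(\Z2\times\Z2);\F2)$; by the 5-term exact sequence of the extension $1 \to \Z2\times\Z2 \to D_8 \to \Z2 \to 1$, the kernel of this restriction is the $1$-dimensional image of $(B\pi)^*$. Restricting the relation $u_1^2 = u_1 v_1$ of~(\ref{relacionesmod2enD8}) to the polynomial ring $H^*(B(\Z2\times\Z2);\F2) = \F2[x_1,x_2]$, one deduces $\mathrm{res}(u_1) = 0$ or $\mathrm{res}(u_1) = \mathrm{res}(v_1)$; a dimension count then yields $(B\pi)^*(z_1) \in \{u_1, u_1+v_1\}$. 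Applying the Bockstein via~(\ref{losvaloresdelareductionB}) and pulling back along $\beta$ gives $u^*(\zeta) \in \{b_2, a_2 + b_2\}$.

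Next I would compute the fourth power directly from the presentation. The relation $b_2^2 + a_2 b_2 = 0$ of~(\ref{relacionesenterasenD8}) together with $2 a_2 b_2 = 0$ yields $b_2^2 = a_2 b_2$, whence $b_2^4 = a_2^3 b_2$ and, analogously, $(a_2+b_2)^4 = a_2^4 + a_2^3 b_2$. Plugging $t = 2$ into~(\ref{lasrelsoddssB}) gives $a_2 \sigma_6 = 0$ and $b_2\sigma_6 + \iota_8 = 0$; evaluating $\sigma_6 = a_2^3 + 2a_2 d_4 = a_2^3$ (using $2a_2 = 0$) and $\iota_8 = 2 d_4^2$, these relations become $a_2^4 = 0$ and $a_2^3 b_2 = -2 d_4^2 = 2 d_4^2$. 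Hence $u^*(\zeta^4) = 2 d_4^2$ in either case for $u^*(\zeta)$; and by Theorems~\ref{aditivoB} and~\ref{baseB}, $H^8(B(\P^5,2)) \cong \mathbb{Z}_4$ with generator $d_4^2$, so $2 d_4^2 \ne 0$, completing the argument.

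The hard part will be the first step: the computation works only because $u^*(\zeta)$ has a nontrivial $b_2$-component. The asymmetry between $a_2\sigma_6 = 0$ and $b_2 \sigma_6 = -\iota_8 \ne 0$ in~(\ref{lasrelsoddssB}) means that the parallel computation with $a_2$ alone would yield $a_2^4 = 0$, and the theorem would fail. Thus the whole argument hinges on the fact that the swap quotient $\pi$ is detected by $u_1$ (and hence integrally by $b_2$) rather than by $v_1$---which is ultimately the content of the relation $u_1^2 = u_1 v_1$, forcing $u_1$ to be the unique class whose restriction to $\Z2\times\Z2$ is trivial.
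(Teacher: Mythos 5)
Your proposal is correct, and its endgame coincides with the paper's: both reduce the theorem to showing that the fourth power of the pulled-back degree-$2$ class equals $2d_4^{\,2}$, which is nonzero because $d_4^{\,2}$ generates the $\mathbb{Z}_4$-group $H^8(B(\P^5,2))$ (Theorems~\ref{aditivoB} and~\ref{baseB}). Your intermediate relations differ only cosmetically: you evaluate the $\delta=1$ relations~(\ref{lasrelsoddssB}) with $t=2$ ($\sigma_6=a_2^3$, $\iota_8=2d_4^{\,2}$), whereas the paper chains the second relation of~(\ref{comunesB}) with Lemma~\ref{reltionskdanbaseB}.\ref{nadauxiliar}; both give $b_2^4=a_2^3b_2=2d_4^{\,2}$. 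Where you genuinely diverge is in identifying $u^*$ on the degree-$2$ generator. The paper factors $u$ through $BD_8$ via Lemma~\ref{laobsdehand} (Handel's characterization of $u_1$ as the unique class restricting trivially to the fiber $\P^\infty\times\P^\infty$) and then pins down $q^*(z_2)=b_2$ exactly by~(\ref{detmncn}), a Wall--Hamada resolution computation that, as the footnote notes, depends on a choice of generators. You instead take the covering-space factorization $u\simeq B\pi\circ\beta$ (in substance the same statement as the paper's $q=q'$ identification, and justified by Definition~\ref{inicio1Handel} together with the corresponding retract $V_{6,2}/(\Z2\times\Z2)\subset F(\P^5,2)$---you should say a word about this rather than merely asserting it), determine $(B\pi)^*(z_1)$ only up to the ambiguity $\{u_1,\,u_1+v_1\}$ from the presentation relation $u_1^2=u_1v_1$ and the five-term exact sequence, transfer this to $u^*(\zeta)\in\{b_2,\,a_2+b_2\}$ via Bockstein naturality and~(\ref{losvaloresdelareductionB}), and then observe that $(a_2+b_2)^4=a_2^4+a_2^3b_2=2d_4^{\,2}$ as well, so the ambiguity is harmless. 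What your route buys is independence from the resolution calculation~(\ref{detmncn}) and from the normalization of generators in~\cite{handeltohoku}; what it costs is the extra case in the power computation and the need to justify the factorization that the paper gets from Handel's Proposition~3.5. As you correctly emphasize, the essential input in either version is that the swap quotient is detected by $u_1$ (hence integrally by a class with nontrivial $b_2$-component), since the $a_2$-analogue $a_2^4=a_2\sigma_6=0$ would kill the argument.
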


The proof of Theorem~\ref{altura} is based on Handel's observation (Lemma~\ref{laobsdehand} below) that~(\ref{classify}) factors through the classifying space of the dihedral group $D_8$ of order 8. 

\begin{lema}\label{laobsdehand}
The map in~{\em(\ref{classify})} corresponds to the pullback under~{\em(\ref{obvious-inclusion-B})} of the class $u_1$ appearing in~{\em(\ref{relacionesmod2enD8})}.
\end{lema}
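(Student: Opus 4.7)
The plan is to first show $u\simeq(Bq)\circ\beta$, where $q\colon D_8\twoheadrightarrow\Z2$ is the quotient by the normal subgroup $\Z2\times\Z2\trianglelefteq D_8$, and then to identify the pullback $(Bq)^*(z_1)\in H^1(D_8;\F2)$ with the class $u_1$ appearing in~(\ref{relacionesmod2enD8}).

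Definition~\ref{inicio1Handel} makes the factorization essentially automatic. The normal subgroup $N=\Z2\times\Z2$ generated by $\rho_1,\rho_2$ acts on $V_{m+1,2}$ by coordinate-wise sign changes, while the full $D_8$ additionally swaps the two vectors; the quotients $V_{m+1,2}/N$ and $V_{m+1,2}/D_8$ deformation-retract onto $\fpm$ and $\bpm$, respectively, compatibly with the obvious double cover. This double cover is therefore modeled by the $\Z2$-principal bundle $V_{m+1,2}/N\to V_{m+1,2}/D_8$, which is the bundle associated to the principal $D_8$-bundle $V_{m+1,2}\to V_{m+1,2}/D_8$ via $q$. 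Since $\beta$ classifies the $D_8$-bundle, this forces $u\simeq(Bq)\circ\beta$, and consequently $u^*(z_1)=\beta^*\bigl((Bq)^*(z_1)\bigr)$.

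Under the canonical isomorphism $H^1(D_8;\F2)=\mathrm{Hom}(D_8,\F2)$, the pullback $(Bq)^*(z_1)$ corresponds to $q$ itself, so it is the unique nonzero element of $H^1(D_8;\F2)$ whose restriction to $N$ vanishes. To confirm that this element is the $u_1$ of the paper (as opposed to $v_1$ or $u_1+v_1$), I would inspect the restriction map $H^*(D_8;\F2)\to H^*(N;\F2)$: its degree-one image is one-dimensional, spanned by the Weyl-invariant class $\alpha+\beta$ for the $\Z2=D_8/N$-action, and the formulas $\rho(a_2)=v_1^2$ and $\rho(b_2)=u_1v_1$ from~(\ref{losvaloresdelareductionB}) pin down $v_1|_N=\alpha+\beta$, forcing $u_1|_N=0$.

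The factorization step is largely bookkeeping and follows directly from Definition~\ref{inicio1Handel}. The main obstacle I anticipate is the label-matching in the second step: the abstract presentation~(\ref{relacionesmod2enD8}) admits a $\mathrm{GL}_2(\F2)$-ambiguity in the basis of $H^1(D_8;\F2)$, and one must invoke the further conventions fixed in~(\ref{relacionesenterasenD8})--(\ref{losvaloresdelareductionB}) to single out $u_1$ from the three nonzero classes. Once this identification is in place, the lemma follows at once from $\beta^*\bigl((Bq)^*(z_1)\bigr)=\beta^*(u_1)$.
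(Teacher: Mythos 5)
Your first step is fine: by Definition~\ref{inicio1Handel} the double cover $F(\P^m,2)\to B(\P^m,2)$ is modeled by $V_{m+1,2}/(\Z2\times\Z2)\to V_{m+1,2}/D_8$, which is the $\Z2$-bundle obtained from the principal $D_8$-bundle classified by $\beta$ via the quotient $q\colon D_8\to\Z2$, so $u\simeq (Bq)\circ\beta$; this is exactly the factorization recorded in the paragraph following the lemma, and it works uniformly in $m$ (the paper's own proof is simply a citation of~\cite[Proposition~3.5]{handel68} plus naturality to remove the hypothesis $m\geq3$). The genuine gap is in your second step, precisely at the label-matching you flagged: the conventions~(\ref{relacionesenterasenD8})--(\ref{losvaloresdelareductionB}) do \emph{not} single out $u_1$ from $u_1+v_1$, so they cannot ``force $u_1|_N=0$''. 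Indeed, the simultaneous substitution $u_1\mapsto u_1+v_1$, $b_2\mapsto a_2+b_2$ (fixing $v_1,w_2,a_2,c_3,d_4$) preserves~(\ref{relacionesenterasenD8}), (\ref{relacionesmod2enD8}) and all four formulas in~(\ref{losvaloresdelareductionB}): one checks $(u_1+v_1)^2=(u_1+v_1)v_1$, $(a_2+b_2)^2+a_2(a_2+b_2)=b_2^2+a_2b_2$, and $\rho(a_2+b_2)=(u_1+v_1)v_1$. This symmetry is realized geometrically by the outer automorphism of $D_8$, which interchanges the two Klein four-subgroups $\langle\rho_1,\rho_2\rangle$ and $\langle\rho,\rho_1\rho_2\rangle$ and hence interchanges $u_1$ and $u_1+v_1$ while fixing $v_1$. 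So although the ring structure does pin down $v_1$ (it is the unique nonzero degree-one class with nonzero product against both other nonzero degree-one classes), and $v_1$ does restrict nontrivially to $N$, that information is compatible with \emph{either} of $u_1$, $u_1+v_1$ being the class that vanishes on $N$; the inference ``$v_1$ restricts to the invariant class, hence $u_1|_N=0$'' is a non sequitur, and the cited reduction formulas give no access to the restrictions of $a_2,b_2$ to $N$ either.

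What breaks the tie is an explicit normalization of $u_1$, namely Handel's definition of this class (equivalently, the Wall--Hamada resolution conventions that also underlie~(\ref{detmncn}) and Lemma~\ref{handelauxi}). Handel's proof of~\cite[Proposition~3.5]{handel68} shows that, with his choices, $u_1$ is the unique nonzero element of $H^1(BD_8;\F2)$ restricting trivially to the fiber $\P^\infty\times\P^\infty$ (equivalently, vanishing on $\Z2\times\Z2$); this is exactly the fact the paper quotes and uses, and your argument must take it as an input rather than attempt to derive it from the abstract presentations. With that input, your factorization argument does give a complete (and $m$-independent) proof, essentially reconstructing Handel's.
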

\begin{proof}
This is proved in~\cite[Proposition~3.5]{handel68} under the extra hypothesis $m\geq 3$, but the restriction can be removed by naturality.
\end{proof}

Thus, the homotopy class in~(\ref{classify}) factors as $B(\P^m,2)\stackrel\beta\rightarrow BD_8\stackrel{q\,}\to\P^\infty$, where $q$ corresponds to the cohomology class $u_1\in H^1(D_8;\F2)$. Our last ingredient for the proof of Theorem~\ref{altura} is a description of the effect of $q$ in integral cohomology. With this in mind, we note that the group extension in Definition~\ref{inicio1Handel} gives a fibration $$\P^\infty\times\P^\infty\stackrel{\iota}\to BD_8\stackrel{q'}\to\P^\infty.$$ On the other hand, Handel's proof of~\cite[Proposition~3.5]{handel68} characterizes $u_1$ as the only nonzero element in $H^1(BD_8;\F2)$ 
mapping trivially under the fiber inclusion $\iota$. Thus, in fact $q=q'$.
In particular, the map induced  
by $q$ in 
integral cohomology 
can be computed 
in purely algebraic terms, using the projection in the group extension in Definition~\ref{inicio1Handel}.
Actually, since $H^*(\P^\infty)=\mathbb{Z}[z_2]\left/2z_2\right.$ 
where $z_2\in H^2(\P^\infty)=\mathbb{Z}_2$ is the generator, $q^*$ is determined by its 
value on $z_2$. A simple exercise using the Wall-Hamada resolution of
the trivial $D_8$-module $\mathbb{Z}$ (see for instance~\cite{handeltohoku})
shows that our generators in~(\ref{relacionesenterasenD8}) can be chosen\footnote{This depends on the user's choice of generators $x$ and $y$ for $D_8$ right at the beginning of~\cite{handeltohoku}.}
so that
\begin{equation}\label{detmncn}
q^*(z_2)=b_2.
\end{equation}
\begin{proof}[Proof of Theorem~{\em\ref{altura}}] In view of~(\ref{detmncn}) and Lemma~\ref{laobsdehand} we only need to check that $b_2^4\neq0$ in $H^*(B(\P^5,2))$---a straightforward task in view of our fine cohomological control of $B(\P^5,2)$:
\begin{eqnarray*}
b_2^4&=&a_2^3b_2\quad\qquad\hspace{-.2mm}\mbox{in view of the fifth relation in~(\ref{relacionesenterasenD8})}\\ &=&a_2b_2d_4\qquad\hspace{.1mm}\mbox{in view of the second relation in~(\ref{comunesB})}\\ &=&2d_4^2\quad\;\qquad\mbox{in view of Lemma~\ref{reltionskdanbaseB}.\ref{nadauxiliar} with $s=1$.}
\end{eqnarray*}
But $d_4^2$ is an element of order $4$ in view of Theorems~\ref{aditivoB} and~\ref{baseB}.
\end{proof}

\begin{nota}\label{noextns}{\em
The same method recovers the equation $\TC^S(\P^3)=5$, proved in~\cite[Theorem~1.4]{taylor}. It should be noted that the cup-power of $b_2\in H^*(B(\P^m,2))$---i.e.~the highest nontrivial cup power of this element---has been described for general $m$ in the Ph.D.~thesis~\cite{tesiscarlos} of the first author. Unfortunately, such a result gives no further information on $\emb(\P^m)$ or, for that matter, on $\TC^S(\P^m)$---this cup-power is just too low for $m\geq7$. This suggests the desirability of computing $h^*(\bpm)$ for other (richer) multiplicative cohomology theories. In such a generalized cohomology setting,~(\ref{cuppower}) could play, together with the concept of {\it weight of  a---generalized---cohomology class}, a more important role than in the current singular cohomology approach, cf.~\cite{ops}. We intend to eventually come back to these ideas.
}\end{nota}

\section{The cohomology ring $H^*(F(\P^m,2))$}\label{HF}

A quick look at the Cartan-Leray spectral sequence for the ($\mathbb{Z}_2\times\mathbb{Z}_2$)-action on $V_{m+1,2}$ in Definition~\ref{inicio1Handel} shows that $H^*(F(\P^m,2))$ has no odd torsion (cf.~\cite{v1}). So, in this section we compute these integral cohomology groups via a thorough study of the 2-primary Bockstein spectral sequence (BSS) of $F(\P^m,2)$.

\medskip\noindent
{\bf The first page of the BSS.} \hspace{.2mm} The following description of the ring $H^*(F(\P^m, 2);\F2)$ was first brought to the authors' attention by Frederick Cohen. Recall the cohomology classes $x_1$ and $y_1$ introduced in the sentence following~(\ref{losvaloresdelareduction}).

\begin{lema}\label{chf2pmmod2}
The map~\emph{(\ref{obvious-inclusion})} induces an epimorphism $H^*(\P^\infty\times\P^\infty;\F2)\to H^*(F(\P^m,2);\F2)$ of rings whose kernel is the ideal generated by the three elements $x_1^{m+1}$, $y_1^{m+1}$, and $\,\sum x_1^iy_1^j$, where the summation runs over $i,j\geq0$, $i+j=m$.
\end{lema}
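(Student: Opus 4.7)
\medskip\noindent
\textbf{Proof plan.} My plan is to factor $\alpha$ as
$$F(\P^m,2)\stackrel{j}{\hookrightarrow}\P^m\times\P^m\stackrel{\iota}{\hookrightarrow}\P^\infty\times\P^\infty$$
and to analyze each inclusion separately. Since $\iota$ induces a surjection on mod 2 cohomology with kernel generated by $x_1^{m+1}$ and $y_1^{m+1}$, the essential content of the lemma reduces to showing that $j^*$ is surjective with kernel equal to the ideal in $H^*(\P^m\times\P^m;\F2)$ generated by $D:=\sum_{i+j=m}x_1^iy_1^j$.

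To accomplish this, I would use the long exact cohomology sequence with $\F2$ coefficients of the pair $(\P^m\times\P^m,F(\P^m,2))$. As $F(\P^m,2)$ is the complement of the diagonal $\Delta(\P^m)\subset\P^m\times\P^m$, whose normal bundle is $T\P^m$, excision together with the (automatic, mod $2$) Thom isomorphism identify $H^i(\P^m\times\P^m,F(\P^m,2);\F2)$ with $H^{i-m}(\P^m;\F2)$. Under this identification, the map into $H^i(\P^m\times\P^m;\F2)$ becomes the Gysin pushforward $\Delta_!$ along the diagonal inclusion $\Delta\colon\P^m\to\P^m\times\P^m$. The projection formula then gives $\Delta_!(z_1^k)=x_1^k\cdot\Delta_!(1)$, and the standard diagonal-class computation for a mod-$2$ Poincar\'e duality space identifies $\Delta_!(1)=D$.

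What remains is the injectivity of $\Delta_!$, whereupon $j^*$ is surjective and $\ker j^*$ coincides with the image of $\Delta_!$, which in turn is the ideal generated by $D$. Injectivity is immediate: the expansion $\Delta_!(z_1^k)=\sum_{i=0}^{m-k}x_1^{k+i}y_1^{m-i}$ contains the monomial $x_1^my_1^k$ (the term $i=m-k$), and these leading terms are linearly independent across $k=0,\ldots,m$ in $\F2[x_1,y_1]/(x_1^{m+1},y_1^{m+1})$. I expect the main step of substance to be the identification of the diagonal class as $\sum_{i+j=m} x_1^iy_1^j$; once that is in hand, the rest is formal manipulation of the Thom--Gysin sequence.
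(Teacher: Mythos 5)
Your proposal is correct and follows essentially the same route as the paper: factor $\alpha$ through $\P^m\times\P^m$, use the Thom--Gysin long exact sequence of the diagonal inclusion (as in Milnor--Stasheff, Section 11), identify the degree-$m$ map with multiplication by the diagonal class $\sum_{i+j=m}x_1^iy_1^j$, and observe that this multiplication is injective so the sequence splits into short exact pieces. Your explicit leading-term verification of injectivity and the projection-formula identification of the image are just slightly more detailed versions of the steps the paper attributes to \cite[Lemma~11.8, Theorem~11.11]{MS}.
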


\begin{proof} The kernel of the morphism induced by the inclusion $\P^m\times\P^m\hookrightarrow\P^\infty\times\P^\infty$ is generated by $x_1^{m+1}$ and $y_1^{m+1}$. The sum $\sum x_1^iy_1^j$ maps to the diagonal cohomology class in $\P^m\times\P^m$ in view of~\cite[Theorem~11.11]{MS}---which restricts to zero in $F(\P^m,2)$. So it suffices to check that the inclusion $F(\P^m,2)\hookrightarrow\P^m\times\P^m$ induces an epimorphism whose kernel is generated by the diagonal class. But~(see \cite[Section~11]{MS}) the map under consideration embeds into a long exact sequence
$$
\cdots\to H^{*-m}(P^m;\Z2)\to H^*(\P^m\times\P^m;\Z2)\to H^*(F(\P^m,2);\Z2)\to\cdots
$$
(written here in terms of the Thom isomorphism for the normal bundle of the diagonal inclusion $\P^m\hookrightarrow\P^m\times\P^m$). The desired conclusion follows from~\cite[Lemma~11.8]{MS} which shows that the map of degree $m$ in this long exact sequence is given by multiplication by the diagonal class $\sum_{i+j=m}x_1^iy_1^j$---a monomorphism in the current case.
\end{proof}

\noindent{\bf First order Bocksteins.} Lemma~\ref{chf2pmmod2} implies that the monomials
\begin{equation}\label{baseasimetrica}
x_1^iy_1^j\quad\mbox{with}\quad0\leq i\leq m,\;0\leq j\leq m-1
\end{equation}
form an $\F2$-basis for the initial page of the BSS. Consider the filtration\footnote{This filtration was suggested by the referee.} $0=F^3\subseteq F^2\subseteq F^1\subseteq F^0=H^*(F(\P^m,2);\F2)$ where $F^k$ is generated by the basis elements in~(\ref{baseasimetrica}) with:
\begin{itemize}
\item $j<m-1$ if $j$ is odd, for $k=1$;

\vspace{-3mm}
\item even $j$, for $k=2$.
\end{itemize}
(Note that $F^1=F^0$ if $m$ is odd.) The filtration is stable under the action of the first Bockstein $\Sq^1$, and we describe next the resulting ``auxiliary'' spectral sequence---converging to the second page of the BSS for $F(\P^m,2)$. In what follows, the reader should keep in mind that the derivation $\Sq^1$ is characterized by $\Sq^1a^{k}=k a^{k+1}$ for $a\in\{x_1,y_1\}$.

\medskip
An $\F2$-basis for the $\Sq^1$-cohomology of $F^2$ is given by (the classes of) $1,y_1^2,\ldots,y_1^{m-2+\delta}$ and, if $m$ is odd, $x_1^{m},x_1^my_1^2,\ldots,x_1^my_1^{m-1}$. Here $m=2t+\delta$ with $\delta\in\{0,1\}$---so that $t$ is as in Theorem~\ref{chf2pm}. Likewise, an $\F2$-basis for the $\Sq^1$-cohomology of $F^1/F^2$ is given by (the classes of) $y_1,y_1^3,\ldots,y_1^{m-3+\delta}$ and, if $m$ is odd, $x_1^{m}y_1, x_1^my_1^3,\ldots,x_1^my_1^{m-2}$. Lastly, we have $F^0/F^1=0$ if $m$ is odd, while for $m$ even an $\F2$-basis for the $\Sq^1$-cohomology of $F^0/F^1$ is given by (the class of) $x_1^my_1^{m-1}$. All these assertions are obvious, except for the last one which requires the following calculation in $H^*(F(\P^m,2);\F2)$: for even $i$ with $0\leq i\leq 2t-2=m-2$,
\begin{eqnarray}
\Sq^1(x_1^iy_1^{2t-1})&=&x_1^iy_1^{2t}\;\,=\,\;x_1^i\left(x_1^{2t}+x_1^{2t-1}y_1+\cdots+x_1y_1^{2t-1}\right)\nonumber\\&=&x_1^{2t}y_1^i+x_1^{2t-1}y_1^{i+1}+\cdots+x_1^{i+1}y_1^{2t-1}\nonumber\\&\equiv&x_1^{i+1}y_1^{2t-1}\pmod{F^1}.\nonumber
\end{eqnarray}
The above considerations give the first page of the auxiliary spectral sequence. Note that, besides~$1$, $x_1^my_1^{m-1}$ (for even $m$) and $x_1^m$ (for odd $m$) represent permanent cycles in the auxiliary spectral sequence, for Lemma~\ref{chf2pmmod2} gives in $H^*(F(\P^m,2);\F2)$
\begin{eqnarray*}
\Sq^1(x_1^my_1^{m-1})&=&x_1^my_1^m\;\,=\;\,0,\mbox{\ \ for even $m$;}\\\Sq^1x_1^m&=&x_1^{m+1}\hspace{3mm}=\;\,0,\mbox{\ \ for odd $m$.}
\end{eqnarray*}
All other classes in the auxiliary spectral sequence are wiped out by $d_1$-differentials since, again in $H^*(F(\P^m,2);\F2)$,
\begin{eqnarray*}
\Sq^1y_1^j&=&y_1^{j+1},\mbox{\hspace{8mm}for odd $j$, $\,1\leq j\leq m-3+\delta$;}\\\Sq^1(x_1^my_1^j)&=&x_1^my_1^{j+1},\mbox{\ \ for odd $j$, $\,1\leq j\leq m-2$ \hspace{1mm} (relevant if $m$ is odd).}
\end{eqnarray*}
Thus, the auxiliary spectral sequence collapses from its second page which, as noted above, gives the second page of the BSS for $F(\P^m,2)$. Further, the BSS collapses from its second page for dimensional reasons. 

\medskip\noindent{\bf Immediate consequences.} The BSS-analysis yields the following standard implications:
\begin{itemize}
\item[(a)] the torsion-free subgroups in $H^*(F(\P^m,2))$ are as described in Theorem~\ref{descripcionordenada}, with non-torsion positive-dimensional cohomology classes $w_{2m-1}$ (for even $m$) and $w_m$ (for odd $m$) having mod~2 reductions
\begin{equation}\label{lareducciondelasws}
\rho(w_{2m-1})=x_1^my_1^{m-1}\mbox{ \  \ and \  \ }\rho(w_m)=x_1^m;
\end{equation}
\item[(b)] the torsion subgroups inject, via the mod~2 reduction map, into $H^*(F(\P^m,2);\F2)$ with image that of the endomorphism
\begin{equation}\label{endosq1}
\Sq^1\colon H^*(F(\P^m,2);\F2)\to H^*(F(\P^m,2);\F2).
\end{equation}
\end{itemize}
This and Lemma~\ref{chf2pmmod2} imply the first two items in Corollary~\ref{consecuenciasF}, and lead (as indicated below) to the groups in Theorem~\ref{descripcionordenada}. Yet, the finer multiplicative description (Theorems~\ref{chf2pm} and~\ref{decirbasicos}) requires a slightly more careful bookkeeping for the resulting classes in the image of~(\ref{endosq1}). This is spelled out next in terms of the torsion elements $x_2,y_2,z_3\in H^*(F(\P^m,2))$ defined in the sentence containing~(\ref{obvious-inclusion}). We work directly with the basis elements in~(\ref{baseasimetrica}), keeping the notation $m=2t+\delta$, $\;\delta\in\{0,1\}$.

\medskip\noindent{\bf Additive counting.} Consider the following partition of the basis elements in~(\ref{baseasimetrica}):
\begin{eqnarray*}
\mathcal{P}_0&=&\{\,\mbox{basis elements in~(\ref{baseasimetrica}) for which $j$ is even and either $i$ is even or $i=m\,$}\};\\\mathcal{P}_1&=&\{\,\mbox{basis elements in~(\ref{baseasimetrica}) for which $i$ and $j$ have distinct parity}\,\}\,-\,\mathcal{P}_0;\\\mathcal{P}_2&=&\{\,\mbox{basis elements in~(\ref{baseasimetrica}) for which both $i$ and $j$ are odd}\,\}\,-\,(\mathcal{P}_0\cup \mathcal{P}_1).
\end{eqnarray*}
Elements in $\mathcal{P}_0$ can be ignored as they have trivial $\Sq^1$-image. A straightforward calculation shows that the set of $\Sq^1$-images of elements in $\mathcal{P}_1$ is formed by the basis elements 
\begin{equation}\label{bonche1}
\mbox{$\rho(x_2^ay_2^b)=x_1^{2a}y_1^{2b}\;\;$ with $\,\;0\leq a\leq t$, $\,0\leq b\leq t-1+\delta\;,$ and $\;(a,b)\neq(0,0)$}
\end{equation}
and, when $\delta=0$, by the (sum of basis) elements
\begin{eqnarray}
\rho(x_2^ay_2^t)&=&x_1^{2a}y_1^{2t}\;\,=\;\,x_1^{2a}(x_1^{2t}+x_1^{2t-1}y_1+\cdots+x_1y_1^{2t-1})\nonumber\\
&=&x_1^{2t}y_1^{2a}+x_1^{2t-1}y_1^{2a+1}+\cdots+x_1^{2a+1}y_1^{2t-1}\label{bonche2}
\end{eqnarray}
for $0\leq a\leq t-1$. Since the elements listed in~(\ref{bonche1}) and~(\ref{bonche2}) are linearly independent, this proves Theorem~\ref{decirbasicos} and, by a simple counting, Theorem~\ref{descripcionordenada}, both in even dimensions. Likewise, the set of $\Sq^1$-images of elements in $\mathcal{P}_2$ is formed by the linearly independent elements
\begin{equation}\label{bonche3}
\mbox{$\rho(x_2^ay_2^bz_3)=x_1^{2a+2}y_1^{2b+1}+x_1^{2a+1}y_1^{2b+2}\;$ with $\;0\leq a\leq t-1+\delta\hspace{.31mm}$ and $\,0\leq b\leq t-2+\delta$.}
\end{equation}
[Note that, when $\delta=0$, the previous assertion would seem to miss the $\Sq^1$-image of basis elements~(\ref{baseasimetrica}) of the form $x_1^{2a+1}y_1^{2t-1}$ with $0\leq a\leq t-1$. However, Lemma~\ref{chf2pmmod2} gives
\begin{eqnarray*}
\Sq^1(x_1^{2a+1}y_1^{2t-1})&=&x_1^{2a+2}y_1^{2t-1}+x_1^{2a+1}y_1^{2t}\\ &=&x_1^{2a+2}y_1^{2t-1}+x_1^{2a+1}\left(x_1^{2t}+x_1^{2t-1}y_1+\cdots+x_1y_1^{2t-1}\right)\\ &=&x_1^{2a+2}y_1^{2t-1}+x_1^{2t}y_1^{2a+1}+x_1^{2t-1}y_1^{2a+2}+\cdots+x_1^{2a+2}y_1^{2t-1}\\ &=&\left(x_1^{2t}y_1^{2a+1}+x_1^{2t-1}y_1^{2a+2}\right)+\cdots+\left(x_1^{2a+4}y_1^{2t-3}+x_1^{2a+3}y_1^{2t-2}\right),
\end{eqnarray*}
which is a linear combination---trivial if $a=t-1$---of the elements in~(\ref{bonche3}).] This proves Theorem~\ref{decirbasicos} and, again by a simple counting, Theorem~\ref{descripcionordenada}, now in odd dimensions. 

\medskip\noindent{\bf Ring structure.} It remains to prove Theorem~\ref{chf2pm}. The two relations $w_{2m-1}^2=0$ (for even $m$) and $w_m^2=0$ (for odd $m$) are forced for dimensional reasons in view of Theorem~\ref{descripcionordenada}. All other relations asserted in~(\ref{comunes})--(\ref{lasrelsoddss}) involve exclusively torsion summands and, in view of the assertion containing~(\ref{endosq1}), can be proved by reducing coefficients mod 2. Such a checking becomes a straightforward task (which is left to the reader) using Lemma~\ref{chf2pmmod2},~(\ref{losvaloresdelareduction}), and~(\ref{lareducciondelasws}). The crux of the matter, then, lies in showing (in the next paragraphs) that these relations give a complete ring presentation for $H^*(F(\P^m,2))$.

\medskip
For a positive integer $m$, consider the graded ring $\mathcal{R}_m=\mathbb{Z}[W,X,Y,Z]/I_m$ where $W,X,Y,Z$ are formal variables of respective degrees $2m-1,2,2,3$ for even $m$, and $m,2,2,3$ for odd $m$, and where $I_m$ is the ideal generated by polynomials $E=E(W,X,Y,Z)$ for which the corresponding element $e=E(w,x_2,y_2,z_3)\in H^*(F(\P^m,2))$ is one of the polynomial expressions on the left hand side of the relations listed in~(\ref{relacionesenteras}) and~(\ref{comunes})--(\ref{lasrelsoddss}). Here we have written $w$ for either $w_{2m-1}$ of $w_m$, according to whether $m$ is even of odd. For instance, for $m=2t$, three of the generators of $I_m$ are $Z^2+XY(X+Y)$, $W^2$, and $\sum X^iY^jZ$, where the summation runs over $i,j\geq0$ with $i+j=t-1$. Thus, we have an epimorphism of rings $\Phi_m\colon\mathcal{R}_m\to H^*(F(P^m,2))$, $\Phi_m(E)=e$. In order to show that this is a ring isomorphism (thus completing the proof of Theorem~\ref{chf2pm}) it suffices to check that 
\begin{equation}\label{elgoalfinalordenado}
\mbox{\it the $\F2$-basis in Theorem~\emph{\ref{decirbasicos}} comes from generators for the torsion groups of $\mathcal{R}_m$.}
\end{equation}
(Indeed, it is evident that $\Phi_m$ yields an isomorphism on the corresponding torsion-free subgroups, while the torsion subgroups of $\mathcal{R}_m$ are $\F2$-vector spaces.) 

\medskip
We start with the $\mathbb{Z}$-basis of monomials $W^iX^jY^kZ^\ell$, $i,j,k,\ell\geq0$, for $\mathbb{Z}[W,X,Y,Z]$, and use each of the generators in $I_m$ to rule out some of these basis elements---the (classes of the) remaining monomials will of course generate $\mathcal{R}_m$. In doing so, we can ignore all monomials $W^i$ with $i\geq0$, for we are focusing on torsion subgroups (so that the generators $2X$, $2Y$, and $2Z$ of $I_m$ are implicitly accounted for). 

\medskip
The generators $W^2$, $X^{t+1}$, $Y^{t+1}$, and $Z^2+XY(X+Y)$ of $I_m$ mean that our list of generating monomials reduces to 
\begin{equation}\label{baseinicial}
W^iX^jY^kZ^\ell,\quad 0\leq j,k\leq t,\quad0\leq i,\ell\leq1
\end{equation}
where, as usual, $m=2t+\delta$, $\delta\in\{0,1\}$. Further, the generators in $I_m$ which come from the relations in~(\ref{lasrelsoddss}) and the last relation in~(\ref{lasrelsevens}), i.e.~those involving $w$, imply that the restriction $0\leq i\leq1$ in~(\ref{baseinicial}) can in fact be strengthened to $i=0$. Thus, in even dimensions we are left with the generating monomials $X^jY^k$ with $0\leq j,k\leq t$, $(j,k)\neq(0,0)$, and, if $\delta=0$, $(j,k)\neq(t,t)$---in view of the generator $X^tY^t$ of $I_m$ for even $m$. This proves~(\ref{elgoalfinalordenado}) in even dimensions. On the other hand, in view of the generator $\sum X^jY^kZ$ of $I_m$ (the sum running over $j,k\geq0$ with $j+k=t$), in odd dimensions we are left with the generating monomials $X^jY^kZ$ with $0\leq j\leq t$ and $0\leq k\leq t-1$, which completes the proof of~(\ref{elgoalfinalordenado}) for odd $m$. Lastly, if $m$ is even, the first of the two generators of $I_m$ 
\begin{equation}\label{dosquequedan}
\sum_{\begin{smallmatrix}j+k=t-1\\j,k\geq0\end{smallmatrix}}\hspace{-1.7mm}X^jY^kZ\quad\;\mathrm{\,and}\,\quad\sum_{\begin{smallmatrix}j+k=t\\j,k\geq0\end{smallmatrix}}X^jY^kZ
\end{equation}
gives in $\mathcal{R}_m$ the relations $X^tZ=-(X^{t-1}Y+\cdots+XY^{t-1})Z=Y^tZ$, so that the second generator in~(\ref{dosquequedan}) can equivalently be replaced by $X^tZ$ (giving $Y^tZ\in I_m$ for free). Thus, this time in odd dimensions we are left with the generating monomials $X^jY^kZ$ with $0\leq j\leq t-1$ and $0\leq k\leq t-2$, which completes the proof of ~(\ref{elgoalfinalordenado}) for even $m$.

\section{The cohomology groups $H^*(B(\P^m,2))$}\label{ahoralaB}
As in the case of $H^*(F(\P^m,2))$ in the previous section, the Cartan-Leray spectral sequence for the $D_8$-action on $V_{m+1,2}$ in Definition~\ref{inicio1Handel} shows that $H^*(B(\P^m,2))$ has no odd torsion---alternatively, use the corresponding property for $F(\P^m,2)$, together with the transfer for the two-fold covering $F(\P^m,2)\to B(\P^m,2)$. Thus, in this section we make a thorough study of the 2-primary BSS of $B(\P^m,2)$ in order to deduce the integral cohomology groups of $B(\P^m,2)$.

\medskip\noindent
{\bf The first page of the BSS.} \hspace{.2mm}The following description of the ring $H^*(B(\P^m, 2);\F2)$ is proved in~\cite[Theorem~3.7]{handel68}. Recall the cohomology classes $u_1$, $v_1$, and $w_2$ introduced in the sentence containing~(\ref{relacionesmod2enD8}).

\begin{lema}\label{chb2pmmod2}
The map~\emph{(\ref{obvious-inclusion-B})} induces an epimorphism $\beta^*\colon H^*(BD_8;\F2)\to H^*(B(\P^m,2);\F2)$ of rings with kernel the ideal generated by the two elements
\begin{equation}\label{lassigmatoras}
\sum_{i\geq0}\binom{m-i}{i}v_1^{m-2i}w_2^i\qquad\mbox{and}\qquad\sum_{i\geq0}\binom{m+1-i}{i}v_1^{m+1-2i}w_2^i.
\end{equation}
\end{lema}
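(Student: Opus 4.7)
The plan is to reproduce Handel's proof~\cite[Theorem~3.7]{handel68} by studying the Serre spectral sequence (with $\F2$ coefficients) of the principal $D_8$-bundle
\[V_{m+1,2}\longrightarrow V_{m+1,2}/D_8\stackrel{\beta}{\longrightarrow}BD_8\]
provided by Definition~\ref{inicio1Handel}. Since $V_{m+1,2}/D_8$ is a strong deformation retract of $B(\P^m,2)$ with classifying map $\beta$, this spectral sequence converges to $H^*(B(\P^m,2);\F2)$ and $\beta^*$ is its base edge homomorphism, so both the surjectivity of $\beta^*$ and the description of its kernel can be read off from $E_\infty$.

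Next, I would invoke the classical presentation $H^*(V_{m+1,2};\F2)=\Lambda_{\F2}(\alpha_{m-1},\alpha_m)$ as an exterior algebra on generators of degrees $m-1$ and $m$, so that the $E_2$-page is supported on just four horizontal rows. Sparseness forces the only potentially nontrivial differentials out of the fiber to be the two transgressions $\tau_1=d_m(\alpha_{m-1})\in H^m(BD_8;\F2)$ and $\tau_2=d_{m+1}(\alpha_m)\in H^{m+1}(BD_8;\F2)$, and multiplicativity yields $E_\infty\cong H^*(BD_8;\F2)/(\tau_1,\tau_2)$. This reduces the lemma to the explicit identification of $\tau_1$ and $\tau_2$ with the polynomials in~(\ref{lassigmatoras}).

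For that identification, I would exploit the fact that the Handel action of $D_8$ on $V_{m+1,2}$ factors through the inclusion $D_8\hookrightarrow O(2)$ acting by change of orthonormal basis on the $2$-plane spanned by a frame. The induced map of classifying spaces $BD_8\to BO(2)$, together with $V_{m+1,2}/O(2)=G_{m+1,2}$, realizes $V_{m+1,2}/D_8\to BD_8$ as a pullback over $BO(2)$ of the classifying map $G_{m+1,2}\to BO(2)$ of the tautological $2$-plane bundle $\gamma$. By naturality of the Serre spectral sequence, $\tau_1$ and $\tau_2$ are pullbacks of the transgressions in the universal frame-bundle spectral sequence $V_{m+1,2}\to G_{m+1,2}\to BO(2)$, which are classically the dual Stiefel--Whitney classes $\bar{w}_m$ and $\bar{w}_{m+1}$ of $\gamma$; expanding the total class $\bar w=1/(1+w_1+w_2)$ gives the closed form $\bar{w}_n=\sum_{i\geq0}\binom{n-i}{i}w_1^{n-2i}w_2^i$. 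Since $BD_8\to BO(2)$ sends $w_1\mapsto v_1$ and $w_2\mapsto w_2$ (with the paper's labeling of generators in~(\ref{relacionesmod2enD8})), this yields exactly the expressions in~(\ref{lassigmatoras}).

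The main obstacle is verifying the two naturality claims: that the transgressions in the universal frame-bundle spectral sequence really are $\bar{w}_m$ and $\bar{w}_{m+1}$, and that the induced homomorphism $H^*(BO(2);\F2)\to H^*(BD_8;\F2)$ sends $w_1\mapsto v_1$ and $w_2\mapsto w_2$. The first is a classical calculation in the theory of Grassmannians; the second can be checked by restricting further to the Klein-four subgroup $\Z2\times\Z2\triangleleft D_8$, where both $v_1,w_2$ and $w_1,w_2$ map to the elementary symmetric polynomials $x_1+y_1$ and $x_1y_1$ in $H^*(B(\Z2\times\Z2);\F2)=\F2[x_1,y_1]$ (the splitting principle for $O(2)$, combined with~(\ref{losvaloresdelareductionB})), and where the restriction is injective on the $u_1$-free subring $\F2[v_1,w_2]\subset H^*(BD_8;\F2)$ in which both $\tau_1$ and $\tau_2$ demonstrably lie.
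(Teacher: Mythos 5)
You should first note that the paper offers no proof of this lemma at all---it is quoted from Handel's Theorem~3.7---so your task was genuinely to reconstruct an argument, and the route you chose (the Borel fibration with fiber $V_{m+1,2}$ over $BD_8$, compared by naturality with the corresponding fibration over $BO(2)$ whose total space is the Grassmannian $G_{m+1,2}$) is the standard one and can be made to work. However, two of the steps that actually produce the statement do not hold as written. First, sparseness does not force $\alpha_{m-1}$ and $\alpha_m$ to be transgressive: the nonzero fiber rows are $0$, $m-1$, $m$, $2m-1$, so a differential $d_2\colon E_2^{0,m}\to E_2^{2,m-1}$, and differentials off the $(2m-1)$-row into the $m$- and $(m-1)$-rows, are perfectly compatible with the degrees; transgressivity has to be imported into the $D_8$-spectral sequence from the universal $O(2)$/Grassmannian computation via the naturality you set up (and for $m=2$, where $V_{3,2}\cong \mathrm{SO}(3)$ is not an exterior algebra and all four rows are adjacent, a separate word is needed). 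Second, and more seriously, ``multiplicativity yields $E_\infty\cong H^*(BD_8;\F2)/(\tau_1,\tau_2)$'' is not a consequence of multiplicativity plus the two transgressions: to get surjectivity of $\beta^*$ you must show the three positive rows die at $E_\infty$, and to get that the kernel is no larger than $(\tau_1,\tau_2)$ you must rule out a final $d_{2m}$ on whatever survives in the top row. Both points are settled once one knows that $\tau_1,\tau_2$ act as a regular sequence on $H^*(BD_8;\F2)$ (multiplication by $\tau_1$ injective, then by $\tau_2$ injective on the quotient); this is true and not hard---$H^*(BD_8;\F2)$ is a free $\F2[v_1,w_2]$-module on $1,u_1$, and $(\bar w_m,\bar w_{m+1})$ is a system of parameters, hence a regular sequence, in $\F2[v_1,w_2]$---but it is absent from your write-up, and without it neither the surjectivity of $\beta^*$ nor the identification of the kernel is established.

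A further point needs attention: restricting to the Klein four subgroup cannot determine the images of $w_1,w_2$ under $H^*(BO(2);\F2)\to H^*(BD_8;\F2)$, because that restriction kills $u_1$ and therefore cannot distinguish $v_1$ from $u_1+v_1$, nor $w_2$ from $w_2+u_1v_1$; so your assertion that $\tau_1$ and $\tau_2$ ``demonstrably lie'' in the $u_1$-free subring is exactly the point to be proved, not an input. In degree one it can be repaired: the image of $w_1$ is the determinant character of the signed-permutation representation, hence one of $v_1$, $u_1+v_1$, and the relation $u_1^2=u_1v_1$ (together with $u_1^2\neq0$ and the characterization of $u_1$ as the class killing the subgroup $\Z2\times\Z2$) forces it to be $v_1$. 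In degree two the identification of the image of $w_2$ with the generator $w_2$ of~(\ref{relacionesmod2enD8}) ultimately rests on Handel's normalization of that class (in his papers it is the Stiefel--Whitney class of the defining $2$-dimensional representation), and should be cited or argued rather than asserted, since the statement of the kernel ideal is sensitive to replacing $w_2$ by $w_2+u_1v_1$. With these repairs your outline does give a correct proof, essentially the classical one behind Handel's theorem.
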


Settling a basis for the mod 2 cohomology of $\bpm$ requires a bit more work than in the case of the $F(\P^m,2)$-analogue~(\ref{baseasimetrica}).

\begin{lema}\label{arithauxi}
For $s=0,1,\ldots,m$, the elements $R_{m+s}=\sum_{i\geq0}\binom{m-s-i}{i}v_1^{m-s-2i}w_2^{s+i}$ vanish in $H^*(B(\P^m,2);\F2)$.
\end{lema}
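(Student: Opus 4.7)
}
The plan is to recognize $R_{m+s}$ as a shift of a Chebyshev-type polynomial and then propagate the two relations from Lemma~\ref{chb2pmmod2} by a three-term recurrence. Concretely, set
$$
P_n(v_1,w_2) \,=\, \sum_{i\geq 0}\binom{n-i}{i}\,v_1^{n-2i}w_2^{\,i},
$$
so that $P_0=1$, $P_1=v_1$, $P_2=v_1^2+w_2$, etc. A direct reindexing shows $R_{m+s}=w_2^{\,s}\,P_{m-s}$, and the two generators of $\ker\beta^*$ listed in~(\ref{lassigmatoras}) are exactly $P_m$ and $P_{m+1}$. Hence $P_m=P_{m+1}=0$ in $H^*(B(\P^m,2);\F2)$, and the task reduces to showing $w_2^{\,s}P_{m-s}=0$ for $0\leq s\leq m$.

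The key algebraic input is the integral identity
$$
P_{n+1} \,=\, v_1\,P_n \,+\, w_2\,P_{n-1},
$$
which follows by collecting the coefficient of $v_1^{n+1-2j}w_2^{\,j}$ on both sides and invoking Pascal's rule $\binom{n-j}{j}+\binom{n-j}{j-1}=\binom{n+1-j}{j}$. In particular, this recurrence holds in $H^*(B(\P^m,2);\F2)$.

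I would then proceed by (strong) induction on $s$. The base case $s=0$ is immediate since $R_m=P_m=0$. For $s=1$, applying the recurrence at $n=m$ gives $w_2P_{m-1}=P_{m+1}+v_1P_m=0$, so $R_{m+1}=0$. For the inductive step $s\geq 2$, apply the recurrence at $n=m-s+1$ to obtain $w_2P_{m-s}=P_{m-s+2}+v_1P_{m-s+1}$; multiplying by $w_2^{\,s-1}$ yields
$$
R_{m+s} \,=\, w_2^{\,s}P_{m-s} \,=\, w_2\cdot\bigl(w_2^{\,s-2}P_{m-s+2}\bigr) \,+\, v_1\cdot\bigl(w_2^{\,s-1}P_{m-s+1}\bigr) \,=\, w_2\,R_{m+s-2} \,+\, v_1\,R_{m+s-1},
$$
which vanishes by the inductive hypothesis. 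The only thing to check is that the induction reaches the endpoint $s=m$ (where $R_{2m}=w_2^{\,m}$), and indeed the step above makes sense for every $2\leq s\leq m$.

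The main (mild) obstacle is spotting the factorization $R_{m+s}=w_2^{\,s}P_{m-s}$ and matching the two generators in~(\ref{lassigmatoras}) with consecutive terms $P_m,P_{m+1}$ of the Chebyshev-like sequence; once this is in place the induction is essentially forced by Pascal's identity.
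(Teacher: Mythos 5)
Your proof is correct and follows essentially the same route as the paper: the base cases come from the two generators in~(\ref{lassigmatoras}) (with $R_{m+1}$ obtained from the second generator plus the $v_1$-multiple of the first), and the induction rests on the three-term recurrence $R_{m+s}=v_1R_{m+s-1}+w_2R_{m+s-2}$, which is exactly the identity the paper invokes, here derived via your polynomials $P_n$ and Pascal's rule.
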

\begin{proof}
The first relation in~(\ref{lassigmatoras}) gives $R_m=0$. The relation $R_{m+1}=0$ follows by adding the second element in~(\ref{lassigmatoras}) to the $v_1$-multiple of the first element in~(\ref{lassigmatoras})---and pulling back under $\beta$. The rest of the relations then follow inductively by noticing that $R_{m+s}=v_1R_{m+s-1}+w_2R_{m+s-2}$.
\end{proof}

\begin{corolario}\label{basemod2B}
An $\F2$-basis for $H^*(B(\P^m,2);\F2)$ is given by the monomials
\begin{equation}\label{basemod2Bdisp}
u_1^{\varepsilon}v_1^rw_2^s\,\mbox{ \ with \ $\,\varepsilon\leq1\,$ \ and \ $\,r+s<m$.}
\end{equation}
\end{corolario}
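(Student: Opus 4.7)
The plan is to leverage Lemma~\ref{chb2pmmod2}, which presents $H^*(B(\P^m,2);\F2)$ as the quotient of $H^*(BD_8;\F2)$ by the ideal $I$ generated by the two elements in~(\ref{lassigmatoras}). Using the relation $u_1^2=u_1v_1$ from~(\ref{relacionesmod2enD8}), $H^*(BD_8;\F2)$ decomposes as a free $\F2[v_1,w_2]$-module on the basis $\{1,u_1\}$. Since the two generators of $I$ lie in the subring $\F2[v_1,w_2]$, this direct-sum decomposition descends to the quotient, so the claim reduces to showing that the monomials $v_1^rw_2^s$ with $r+s<m$ form an $\F2$-basis for $A:=\F2[v_1,w_2]/I_0$, where $I_0$ is the ideal of $\F2[v_1,w_2]$ generated by the two elements in~(\ref{lassigmatoras}).

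Spanning will follow from Lemma~\ref{arithauxi} by a downward induction on the total power $r+s$. The leading monomial of $R_{m+s}$ (the $i=0$ term) is $v_1^{m-s}w_2^s$, of total power exactly $m$, while each remaining term $v_1^{m-s-2i}w_2^{s+i}$ with $i\geq1$ has total power $m-i<m$. Thus, for $0\leq s\leq m-1$ and $r\geq m-s$, multiplying the relation $R_{m+s}=0$ through by $v_1^{r+s-m}$ expresses $v_1^rw_2^s$ as an $\F2$-combination of monomials of strictly smaller total power. The extreme relation $R_{2m}=0$ collapses to $w_2^m=0$, handling monomials with $s>m$. Iterating the process forces every monomial into the range $r+s<m$.

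For linear independence, the crucial step is to verify that the two generators of $I_0$ form a regular sequence in $\F2[v_1,w_2]$. I would pass to $\F2[x,y]$ via the symmetric-function substitution $v_1=x+y$, $w_2=xy$, under which the two generators become $R_m=(x^{m+1}+y^{m+1})/(x+y)$ and $S=(x^{m+2}+y^{m+2})/(x+y)$. A short Euclidean-algorithm computation gives $xR_m+S=y^{m+1}$ and, by $x\leftrightarrow y$ symmetry, $yR_m+S=x^{m+1}$, so both $x^{m+1}$ and $y^{m+1}$ lie in the ideal generated by $R_m$ and $S$ in $\F2[x,y]$. Consequently $\gcd(R_m,S)=1$ in $\F2[x,y]$, and hence also in the subring $\F2[v_1,w_2]$. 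Since $\F2[v_1,w_2]$ is a two-dimensional UFD, coprime elements form a regular sequence, and the Koszul resolution delivers
\[
P(A,t)\;=\;\frac{(1-t^m)(1-t^{m+1})}{(1-t)(1-t^2)}.
\]
A routine rearrangement of the right-hand side identifies it with $\sum_{r+s<m}t^{r+2s}$, which is precisely the Poincar\'e series of the $\F2$-span of the proposed basis. Combined with spanning, this matching of Poincar\'e series forces independence.

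The main obstacle is the regular-sequence verification through the $x,y$ identification; once this algebraic fact is secured, the Koszul resolution and the Poincar\'e-series manipulation are entirely routine.
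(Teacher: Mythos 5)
Your argument is correct, and its second half takes a genuinely different route from the paper. The spanning step is the same in both: the relation $u_1^2=u_1v_1$ from~(\ref{relacionesmod2enD8}) together with the relations $R_{m+s}=v_1^{m-s}w_2^s+\cdots=0$ of Lemma~\ref{arithauxi} reduce every monomial to the list~(\ref{basemod2Bdisp}) (your downward induction on $r+s$ just makes this explicit; note only that $w_2^m=0$ handles $s\geq m$, not just $s>m$). For linear independence, however, the paper simply counts: it invokes Sublemma~\ref{handel68B}, whose proof rests on Poincar\'e duality for the closed $(2m-1)$-manifold $V_{m+1,2}/D_8$ together with the low-degree isomorphism with $H^*(BD_8;\F2)$ coming from Lemma~\ref{chb2pmmod2}. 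You instead stay entirely within commutative algebra: you use the rank-two free decomposition of $H^*(BD_8;\F2)$ over $\F2[v_1,w_2]$, observe that the two kernel generators of~(\ref{lassigmatoras}) lie in that subring, and verify they form a regular sequence by passing to $\F2[x,y]$ via $v_1\mapsto x+y$, $w_2\mapsto xy$ (under which they become the complete homogeneous symmetric polynomials $h_m$ and $h_{m+1}$; topologically this is restriction to the subgroup $\Z2\times\Z2\subset D_8$), after which the Koszul resolution gives the Hilbert series $(1-t^m)(1-t^{m+1})/((1-t)(1-t^2))$, which indeed equals $\sum_{r+s<m}t^{r+2s}$. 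I checked your identities $xR_m+S=y^{m+1}$, $yR_m+S=x^{m+1}$, the coprimality/regular-sequence deduction in the UFD, and the series rearrangement; all are sound. What your approach buys is independence from Poincar\'e duality: it in effect reproves Sublemma~\ref{handel68B} (after multiplying by $1+t$ for the $u_1$-factor) from the presentation in Lemma~\ref{chb2pmmod2} alone, at the cost of the symmetric-function and Koszul bookkeeping; the paper's route is shorter but imports the manifold structure of $V_{m+1,2}/D_8$ as an extra topological input.
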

\begin{proof}
In view of~(\ref{relacionesmod2enD8}) and the relations $R_{m+s}=v_1^{m-s}w_2^s+\cdots=0$ in Lemma~\ref{arithauxi}, the indicated elements are additive generators. Linear independence follows from the next result, since the number of monomials in~(\ref{basemod2Bdisp}) matches the (graded-wise) $\F2$-dimension of $H^*(B(\P^m,2);\F2)$.
\end{proof}

\begin{sublema}\label{handel68B}
For any $m$, 

\vspace{-2.5mm}
$$H^i(B(\P^m,2);{\F2})=\begin{cases}\langle 
i+1\rangle,& 0\leq i\leq m-1;\\
\langle2m-i\rangle,& m\leq i\leq 2m-1;\\0,&\mbox{otherwise}.\end{cases}$$
\end{sublema}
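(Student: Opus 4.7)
The approach is to extract the Poincar\'e series of $H^*(B(\P^m, 2); \F2)$ from the presentation supplied by Lemma~\ref{chb2pmmod2}, and then read off the dimensions grade-by-grade. Denote the two generators of $\ker(\beta^*)$ displayed in~(\ref{lassigmatoras}) by $p_m$ and $p_{m+1}$, where more generally
\begin{equation*}
p_n \,=\, \sum_{i \geq 0} \binom{n-i}{i} v_1^{n-2i} w_2^i \,\in\, \F2[v_1, w_2]
\end{equation*}
for $n \geq 0$. Since $H^*(BD_8; \F2) = \F2[u_1, v_1, w_2]/(u_1^2 + u_1 v_1)$ is free as a graded module over the subring $\F2[v_1, w_2]$ on the basis $\{1, u_1\}$, and the ideal $\ker(\beta^*)$ is extended from that subring, we obtain
\begin{equation*}
P_{\F2}(B(\P^m, 2); t) \,=\, (1 + t) \cdot P_{\F2}\!\left(\F2[v_1, w_2]/(p_m, p_{m+1});\,t\right).
\end{equation*}

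The central technical step is to verify that $p_m$ and $p_{m+1}$ form a regular sequence in $\F2[v_1, w_2]$. I would first derive, using Pascal's rule $\binom{n+1-i}{i} = \binom{n-i}{i} + \binom{n-i}{i-1}$ modulo $2$, the three-term recursion
\begin{equation*}
p_{n+1} \,=\, v_1\, p_n + w_2\, p_{n-1}, \qquad p_0 = 1,\; p_1 = v_1.
\end{equation*}
Then, working in the unique factorization domain $\F2[v_1, w_2]$, I would prove by induction on $m$ that $\gcd(p_m, p_{m+1}) = 1$: if an irreducible $p$ divided both, the recursion would force $p \mid w_2\, p_{m-1}$; the evaluation $p_m|_{w_2 = 0} = v_1^m \neq 0$ rules out $p = w_2$, leaving $p \mid \gcd(p_{m-1}, p_m)$, which contradicts the inductive hypothesis. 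Coprimality in two variables yields a regular sequence, so the Koszul complex gives
\begin{equation*}
P_{\F2}\!\left(\F2[v_1, w_2]/(p_m, p_{m+1});\,t\right) \,=\, \frac{(1 - t^m)(1 - t^{m+1})}{(1-t)(1-t^2)}.
\end{equation*}

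Substituting and cancelling the factor $(1+t)$ against the $(1-t^2) = (1-t)(1+t)$ in the denominator, one finds
\begin{equation*}
P_{\F2}(B(\P^m, 2); t) \,=\, \frac{(1-t^m)(1-t^{m+1})}{(1-t)^2} \,=\, \bigl(1 + t + \cdots + t^{m-1}\bigr)\bigl(1 + t + \cdots + t^m\bigr).
\end{equation*}
The coefficient of $t^d$ on the right counts the pairs $(i, j)$ with $0 \leq i \leq m-1$, $0 \leq j \leq m$, and $i + j = d$; this is $d + 1$ when $0 \leq d \leq m - 1$ and $2m - d$ when $m \leq d \leq 2m - 1$, matching the sublemma exactly. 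The main obstacle is the regular-sequence check; once the recursion $p_{n+1} = v_1 p_n + w_2 p_{n-1}$ is identified, the induction on $\gcd(p_m,p_{m+1})$ is essentially immediate, and the rest reduces to a straightforward expansion of $[m]_t\,[m+1]_t$.
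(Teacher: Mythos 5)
Your proof is correct, but it takes a genuinely different route from the paper's. The paper argues topologically: it uses the fact that $B(\P^m,2)$ has the homotopy type of the closed $(2m-1)$-manifold $V_{m+1,2}/D_8$ of Definition~\ref{inicio1Handel} to kill everything above degree $2m-1$, invokes Poincar\'e duality with $\F2$ coefficients to deduce the range $m\leq i\leq 2m-1$ from the range $0\leq i\leq m-1$, and uses Lemma~\ref{chb2pmmod2} only in degrees below $m$, where the two relations of~(\ref{lassigmatoras}) (living in degrees $m$ and $m+1$) cannot yet act, so that $\beta^*$ is an isomorphism onto $H^i(BD_8;\F2)=\langle i+1\rangle$. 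You instead squeeze the entire answer out of the presentation: using freeness of $H^*(BD_8;\F2)$ over $\F2[v_1,w_2]$ on $\{1,u_1\}$, the recursion $p_{n+1}=v_1p_n+w_2p_{n-1}$ (the same identity the paper exploits in Lemma~\ref{arithauxi}), a gcd induction showing the two relations form a regular sequence, and the Koszul resolution to get the Hilbert series $(1+t+\cdots+t^{m-1})(1+t+\cdots+t^m)$; all steps check out, including the freeness/extended-ideal reduction and the coefficient count. What each approach buys: the paper's proof is shorter and needs no commutative algebra beyond reading off degrees, but leans on the manifold structure and Poincar\'e duality; yours is independent of the manifold structure, shows that Handel's relations form a (complete-intersection) regular sequence --- which explains the duality of the answer algebraically --- and in effect directly certifies the dimension count that Corollary~\ref{basemod2B} needs for linear independence.
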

\begin{proof}
The assertion for $i\geq2m$ follows from the fact that $B(\P^m,2)$ has the homotopy type of the closed ($2m-1$)-dimensional manifold $V_{m+1,2}/D_8$ (see Definition~\ref{inicio1Handel}). Poincar\'e duality then gives the assertion for $m\leq i\leq 2m-1$ as a consequence of that for $0\leq i\leq m-1$. Lastly, the assertion for $0\leq i\leq m-1$ follows from Lemma~\ref{chb2pmmod2} and the fact that $H^i(BD_8;{\F2})=\langle i+1\rangle$; indeed, the presentation~(\ref{relacionesmod2enD8}) implies that an $\F2$-basis for $H^*(BD_8;\F2)$ is given by all monomials $u_1^\varepsilon v_1^rw_2^s$ with $\varepsilon\leq1$ (this basis will be in force throughout the next considerations).
\end{proof}

\noindent{\bf The auxiliary spectral sequence.} The $\Sq^1$-action on $H^*(BD_8;\F2)$ is implicit in~\cite[Proposition~3.5]{handel68}: $\Sq^1(w_2)=v_1w_2$, and $\Sq^1(\xi_1)=\xi_1^2$ for $\xi_1\in\{u_1,v_1\}$. The Cartan formula then yields
\begin{equation}\label{cartanformula}
\Sq^1(u_1^\varepsilon v_1^rw_2^s)=(\varepsilon+r+s)u_1^\varepsilon v_1^{r+1}w_2^s,
\end{equation}
which also holds in $H^*(B(\P^m,2);\F2)$ by naturality. Consider the filtration $0=B^m\subseteq B^{m-1}\subseteq\cdots\subseteq B^1\subseteq B^0=H^*(B(\P^m,2);\F2)$ where $B^k$ is generated by the basis elements in~(\ref{basemod2Bdisp}) with $s\geq k$. Each $B^k$ is stable under the action of $\Sq^1$ in view of Lemma~\ref{arithauxi} and~(\ref{basemod2Bdisp}). We describe next the resulting ``auxiliary'' spectral sequence---converging to the second page of the BSS for $B(\P^m,2)$.

\medskip
For $k=0,\ldots,m-1$, a basis for $B^k/B^{k+1}$ is given by the monomials~(\ref{basemod2Bdisp}) with $s=k$ and, in these terms, the filtered $\Sq^1$-action takes the form
$$
\Sq^1(u_1^\varepsilon v_1^rw_2^k)=\begin{cases}(\varepsilon+r+k)u_1^\varepsilon v_1^{r+1}w_2^s,&r+k<m-1;\\0,&r+k=m-1,\end{cases}
$$
in view of~(\ref{cartanformula}) and Lemma~\ref{arithauxi}. Then, an $\F2$-basis for the cycles in the 0-th page of the auxiliary spectral sequence is given by the monomials in~(\ref{basemod2Bdisp}) for which either $r+s=m-1$ or $\varepsilon+r+s$ is even. Likewise, an $\F2$-basis for the corresponding boundaries is given by the monomials in~(\ref{basemod2Bdisp}) for which 
\begin{equation}\label{rankd0}
r>0\qquad\mbox{and}\qquad\varepsilon+r+s\equiv0\bmod2.
\end{equation}
Thus, an $\F2$-basis for the first page of the auxiliary spectral sequence is given by the monomials in~(\ref{basemod2Bdisp}) for which one of the following two conditions holds:
\begin{itemize}
\item[(a)] $r+s=m-1$, and either $r=0$ or $\varepsilon+r+s$ is odd.

\vspace{-3mm}
\item[(b)] $r=0$, $s<m-1$, and $\varepsilon+s$ is even.
\end{itemize}

The explicit elements of type (b) are $u_1^\varepsilon w_2^s$ for $0\leq s\leq m-2$ and $\varepsilon+s\equiv0\bmod2$, all of which are permament cycles in the auxiliary spectral sequence in view of~(\ref{cartanformula}). On the other hand, the explicit elements of type (a) are $u_1w_2^{m-1}$, $w_2^{m-1}$ and
\begin{equation}\label{lasd1sdiffsauxil}
u_1^\varepsilon v_1^{m-s-1}w_2^s\mbox{\ \ \ for \ }0\leq s\leq m-2\mbox{ \ and \ }\varepsilon+m\equiv0\bmod2.
\end{equation}
We show next that most of these $m+1$ elements are wiped out by $d_1$-differentials in the auxiliary spectral sequence, whereas the few $d_1$-cycles which are not $d_1$-boundaries are in fact permanent cycles. 

\medskip\noindent 
{\em Case $m$ even:} (Note that $\varepsilon=0$ in~(\ref{lasd1sdiffsauxil}).) The differentials
\begin{equation}\label{rankd1par}
d_1(v_1^{m-2i-1}w_2^{2i})=v_1^{m-2i-2}w_2^{2i+1},\quad0\leq i\leq\frac{m}{2}-1,
\end{equation}
hold since~(\ref{cartanformula}) and Lemma~\ref{arithauxi} give $\Sq^1(v_1^{m-2i-1}w_2^{2i})=v_1^{m-2i}w_2^{2i}\equiv v_1^{m-2i-2}w_2^{2i+1}$ mod~$B^{2i+2}$. On the other hand, the only element of type (a) not considered in the above $d_1$-differentials, namely $u_1w_2^{m-1}$, is in fact a permanent cycle in view of~(\ref{cartanformula}). 

\medskip\noindent 
{\em Case $m$ odd:} (Note that $\varepsilon=1$ in~(\ref{lasd1sdiffsauxil}).) The differentials
\begin{equation}\label{rankd1imp}
d_1(u_1v_1^{m-2i}w_2^{2i-1})=u_1v_1^{m-2i-1}w_2^{2i},\quad1\leq i\leq\frac{m-1}{2},
\end{equation}
hold since~(\ref{cartanformula}) and Lemma~\ref{arithauxi} give $\Sq^1(u_1v_1^{m-2i}w_2^{2i-1})=u_1v_1^{m-2i+1}w_2^{2i-1}\equiv u_1v_1^{m-2i-1}w_2^{2i}$ mod $B^{2i+1}$. On the other hand, the only two elements of type (a) not considered in the above $d_1$-differentials, namely $u_1v_1^{m-1}$ and $w_2^{m-1}$, are in fact permanent cycles. Indeed, the assertion is obvious from~(\ref{cartanformula}) in the case of $w_2^{m-1}$. For $u_1v_1^{m-1}$ use~(\ref{cartanformula}) and Lemma~\ref{arithauxi} to get 
$$
\Sq^1(u_1v_1^{m-1})=u_1v_1^m=\sum_{i\geq1}\binom{m-i}{i}u_1v_1^{m-2i}w_2^i,
$$
and note that $\binom{m-i}i$ is even if $i$ is odd, whereas $u_1v_1^{m-2i}w_2^i=\Sq^1(u_1v_1^{m-2i-1}w_2^i)$ if $i$ is even. So, the element
\begin{equation}\label{representantecorrecto}
u_1v_1^{m-1}+\sum_{i\geq1}\binom{m-2i}{2i}u_1v_1^{m-4i-1}w_2^{2i}
\end{equation}
is a permanent cycle in the auxiliary spectral sequence representing the same class as $u_1v_1^{m-1}$. 

\medskip\noindent{\bf Second order Bocksteins.} We have proved that an $\F2$-basis of the second page of the BSS of $B(\P^m,2)$ is represented by the monomials 
\begin{equation}\label{losmonomios}
u_1^\varepsilon w_2^s,\;\;\;0\leq s\leq m-1,\;\;\varepsilon+s\equiv0\;\bmod2,
\end{equation}
together with an extra basis element represented by~(\ref{representantecorrecto}) if $m$ is odd. Next we analyze the second Bockstein differentials in $B(\P^m,2)$ and, for this purpose, we begin by taking a look at the BSS of $BD_8$. Observe from~(\ref{cartanformula}) that an $\F2$-basis for the second page of the BSS for $BD_8$ is represented by the monomials $u_1^{\varepsilon}w_2^s$ with $\varepsilon+s\equiv0\bmod2$. Furthermore, the family of second Bockstein differentials
\begin{equation}\label{betas2}
\beta_2(u_1w_2^{2\ell-1})=w_2^{2\ell} \mbox{\ \ for \ }\ell\geq1
\end{equation}
follows from the fact that the only 4-torsion classes in $H^*(BD_8)$ come from the powers $d_4^{\hspace{.2mm}\ell}$, which are concentrated in positive dimensions congruent to zero modulo 4. In particular, the third page of the BSS for $BD_8$ is concentrated in degree 0, forcing its collapse from this page on. Now, the $\beta_2$-differentials in~(\ref{betas2}) pull back under the map in~(\ref{obvious-inclusion-B}) to yield a corresponding family of second Bockstein differentials in $B(\P^m,2)$; this wipes all of the monomials in~(\ref{losmonomios}), except for those with $s=0$ and, for even $m$, $s=m-1$. On the other hand, if $m\equiv1\bmod4$, the element in~(\ref{representantecorrecto}) has trivial $\beta_2$-differential because none of the elements in~(\ref{losmonomios}) lies in a dimension congruent to 2 mod 4. In any case, only two classes survive to the third page of the BSS of $B(\P^m,2)$: $1=u_1^0w_2^0\hspace{.5mm}$ and a class represented by 
\begin{itemize}
\item[(i)] $u_1w_2^{m-1}$, if $m$ is even;

\vspace{-1mm}
\item[(ii)] either (\ref{representantecorrecto}) or the sum of~(\ref{representantecorrecto}) with $u_1w_2^t$, if $m=2t+1$.
\end{itemize}
The actual representative in~(ii) depends on whether the second Bockstein of~(\ref{representantecorrecto}) is trivial or not. [As noted above,~(\ref{representantecorrecto}) alone gives the right representative if $t$ is even; however, the final considerations in Section~\ref{HBring} imply that the extra summand $u_1w_2^t$ is actually needed for odd $t$.] The BSS of $B(\P^m,2)$ collapses from this point on for dimensional reasons. 

\medskip\noindent{\bf Immediate consequences.} The above BSS-analysis has a number of standard implications. First, we see that the torsion-free subgroups in $H^*(B(\P^m,2))$ are as described in Theorem~\ref{aditivoB}, with a torsion-free positive-dimensional cohomology generator, $e_{2m-1}$ for even $m$, and $e_m$ for odd $m$. Their mod~2 reductions are described (partially\footnote{The indeterminacy will be removed in Section~\ref{HBring}.}, for $m\equiv3\bmod4$) in~(i) and ~(ii) above. Second, multiplication by 4 kills the torsion subgroups in the integral cohomology of $B(\P^m,2)$. Next, not only does the map~(\ref{obvious-inclusion-B}) give a surjection on the first page of the corresponding BSS's (Lemma~\ref{chb2pmmod2}), but on positive degrees of the second page level, it maps onto non-permanent cycles. Together with the collapse of both spectral sequences from  their third pages on, this yields the first two items in Corollary~\ref{suprayeccionB}. The last of the immediate consequences of our BSS-analysis for $B(\P^m,2)$ is that we have a good hold on the number of direct summands $\mathbb{Z}_2$ and $\mathbb{Z}_4$ in the integral cohomology of $B(\P^m,2)$. Indeed, these are given by the $\F2$-dimension of the images of the first and second Bockstein differentials, respectively. The explicit counting of dimensions (which yields the proof of Theorem~\ref{aditivoB}) is done in the next paragraphs.

\medskip\noindent{\bf Additive counting.} In view of~(\ref{losmonomios}) and~(\ref{betas2}), an $\F2$-basis for the $\beta_2$-image is given by the monomials $w_2^{2i}$ for $1\leq i\leq m-2+\delta$ where $m=2t+\delta$, $\delta\in\{0,1\}$ (note that the $\beta_2$-indeterminacy inherent in~(ii) above does not play a role here). Therefore, there is a single $\mathbb{Z}_4$-summand only in each positive dimension $n$ satisfying $n<2m-1$ and $\hspace{.25mm}n\equiv0\bmod4$.

\medskip
Counting the $\F2$-dimension of the $\Sq^1$-image gets (combinatorially) more involved, but the task is simplified by working in terms of the auxiliary spectral sequence. Level-$0$ (i.e.~filtered) $\Sq^1$-boundaries have $\F2$-basis given by the monomials in~(\ref{basemod2Bdisp}) satisfying~(\ref{rankd0}); level-$1$ $\Sq^1$-boundaries (i.e.~$d_1$-differentials in the auxiliary spectral sequence) have the $\F2$-basis indicated on the right hand side of the equations in~(\ref{rankd1par}) and~(\ref{rankd1imp}). Since there are no higher-level $\Sq^1$-boundaries (i.e.~higher differentials), we find that, up to elements of higher auxiliary filtration (a proviso which is irrelevant for the purpose of counting $\F2$-dimensions), an $\F2$-basis for the $\Sq^1$-boundaries consists of the monomials $u_1^\varepsilon v_1^rw_2^s$ satisfying one of the following two (disjoint) sets of conditions:
\begin{eqnarray}
&\mbox{$\varepsilon\leq1$, $\;\;r+s<m$, $\;\;r>0$, \ \ and $\;\;\varepsilon+r+s\equiv0\;\bmod2$;}\label{level1}&\\&\mbox{$\varepsilon=\delta$, $\;\;r=m-2i-2+\delta$, \ \ and  $\;\;s=2i+1-\delta$, \ \ for $\;\;\delta\leq i\leq t-1+\delta$.}\label{level2}&
\end{eqnarray}

Theorem~\ref{aditivoB} now follows from a dimension-wise count of the above basis elements. The required checking is straightforward, but the legwork comes from the large number of cases to consider. For the reader's benefit, we illustrate the type of counting needed by working out a representative case, namely the one corresponding to the eighth line in the description of $H^*(B(\P^{2t+1},2))$ in Theorem~\ref{aditivoB}: We want to count the number of basis elements $u_1^\varepsilon v_1^rw_2^s$ satisfying~(\ref{level1}) or~(\ref{level2}), as well as
\begin{equation}\label{lailustracion}
m=2t+1<\dim(u_1^\varepsilon v_1^rw_2^s)=4a+1\leq4t+1.
\end{equation}
Note that the equality in~(\ref{lailustracion}) and the last condition in~(\ref{level1}) force $s$ to be odd, so the equality in~(\ref{lailustracion}) becomes $1\equiv2+r+\varepsilon\bmod4$. This happens only for $r\equiv2\bmod4$ (with $\varepsilon=1$) or $r\equiv3\bmod4$ (with $\varepsilon=0$). Thus, the actual possibilities for the pair $(\varepsilon,r)$ are $(1,4i-2)$ and $(0,4i-1)$---both with $s=2a-2i+1$ in view of the equality in~(\ref{lailustracion})---for $1\leq i\leq t-a$, where the latter inequality comes from the second condition in~(\ref{level1}). Therefore, there are $2(t-a)$ basis elements in dimension $4a+1$ accounted for by~(\ref{level1}). The extra basis element reported by the group $\langle2t+1-2a\rangle$ in Theorem~\ref{aditivoB} comes from~(\ref{level2}), where the dimensional hypothesis in~(\ref{lailustracion}) becomes $i=2a-t$ (this is in the range indicated in~(\ref{level2}), in view of~(\ref{lailustracion})).

\section{The ring structure of $H^*(B(\P^m,2))$}\label{HBring}
We now prove Theorems~\ref{chb2pm} and~\ref{baseB}. Unlike the case of $F(\P^m,2)$, where the proof of Theorem~\ref{chf2pm} uses the auxiliary algebraic model $\mathcal{R}_m$, proofs in this section depend on a very explicit handling of relations in the torsion subgroups of the integral cohomology ring of $B(\P^m,2)$. In particular, the method in the final part of this section (proof of Theorem~\ref{baseB}) is similar to the deduction of the relations $R_{m+s}$ in Lemma~\ref{arithauxi} and their use in Corollary~\ref{basemod2B} for easily obtaining an additive basis for $H^*(B(\P^m,2);\F2)$.

\medskip\noindent
{\bf The relations: simplifying considerations.} The equations in~(\ref{lasrelsevensB}) and~(\ref{lasrelsoddssB}) corresponding to $d_4^{\,t+\delta}=0$ follow from dimensional considerations. This is also the case for the family of equations in~(\ref{lasrelsevensB}) involving $e_{2m-1}$. Further, the first three equations in~(\ref{lasrelsoddssB}) follow respectively from the three relations in~(\ref{comunesB}) because the maps in~(\ref{obvious-inclusion-B}) are compatible under the equatorial inclusion $B(\P^{2t+1},2)\hookrightarrow B(\P^{2t+2},2)$. We now focus on 
\begin{equation}\label{cincorestantes}
\mbox{the three equations in~(\ref{comunesB}) and the first two equations in~(\ref{lasrelsevensB}).}
\end{equation}

A straightforward calculation (left to the reader) using~(\ref{relacionesenterasenD8})--(\ref{losvaloresdelareductionB}),~(\ref{lassigmatoras}), and Lemma~\ref{arithauxi} shows that the equations in~(\ref{cincorestantes}) hold after applying the mod 2 reduction morphism $\rho\colon H^*(\bpm)\to H^*(\bpm,\F2)$. The latter map is monic on torsion elements of dimension not divisible by~$4$ (where there are no copies of $\mathbb{Z}_4$), so that the equations in~(\ref{cincorestantes}) lying in dimensions not divisible by~$4$ already hold in $H^*(\bpm)$. As for the equations in~(\ref{cincorestantes}) that lie in dimensions divisible by $4$, note that:
\begin{itemize}
\item the equatorial inclusion $B(\P^{2t},2)\hookrightarrow B(\P^{2t+1},2)$ induces a cohomology epimorphism in even dimensions (Corollary~\ref{suprayeccionB}.\ref{factepiB}), and 
\item the groups $H^*(B(\P^{2t},2))$ and $H^*(B(\P^{2t+1},2))$ are isomorphic in even dimensions not greater than $4t-1$ (Theorem~\ref{aditivoB}).
\end{itemize}
So, the only equations in~(\ref{cincorestantes}) actually requiring direct verification are
\begin{eqnarray} 
&&a_2\sigma_{2t}=0\mbox{ \ \ and \ \ } b_2\sigma_{2t}+\iota_{2t+2}=0\mbox{ \ \ for $t$ odd, $\;t\geq3$;}\label{AprimeraA}\\ &&\ \hspace{1cm}b_2d_4\sigma_{2t-2}+\iota_{2t+4}=0\mbox{ \ \ for $t$ even, $\;t\geq4$,}\label{AsegundaA}
\end{eqnarray}
all of these with $\delta=0$ (i.e.~as elements of $H^*(B(\P^{2t},2))$), as well as
\begin{eqnarray} 
&&a_2\sigma_{2t}=0\mbox{ \ \ and \ \ } b_2\sigma_{2t}+\iota_{2t+2}=0\mbox{ \ \ for $t=1$;}\label{BprimeraB}\\ &&\ \hspace{1cm}b_2d_4\sigma_{2t-2}+\iota_{2t+4}=0\mbox{ \ \ for $t=2$,}\label{BsegundaB}
\end{eqnarray}
all of these with $\delta=1$ (i.e.~as elements of $H^*(B(\P^{2t+1},2)))$.

\medskip\noindent
{\bf The relations: strategy of proof.} Equations (\ref{AprimeraA})--(\ref{BsegundaB}) can be approached\footnote{The idea can be used to verify most of the relations claimed in Theorem~\ref{chb2pm} (e.g.~all of the equations in~(\ref{cincorestantes})---except for the second equation in~(\ref{BprimeraB}), see below), but the legwork is conveniently reduced by the above `simplifying considerations'.} through the commutative diagram
\begin{equation}\label{strategia}
\begin{diagram}
\node{H^{*-1}(BD_8)}\arrow{s,t}{\beta^*}\arrow{e,t}{\rho}\node{H^{*-1}(BD_8;\F2)}\arrow{s,t}{\beta^*}\arrow{e,t}{\partial}\node{H^{*}(BD_8)}\arrow{s,t}{\beta^*}\\\node{H^{*-1}(B(\P^m,2))}\arrow{e,t}{\rho}\node{H^{*-1}(B(\P^m,2);\F2)}\arrow{e,t}{\partial}\node{H^{*}(B(\P^m,2))}
\end{diagram}
\end{equation}
where the rows are portions of the long exact sequences giving the corresponding BSS's. Namely, exactness implies that the triviality of an element $\zeta\in H^{*}(B(\P^m,2))$ with $2\zeta=0$---i.e.~in the image of the boundary operator of the bottom row---is established by showing that $\zeta$ lies in the image of the composite lower row. Such a task can be carried out in terms of the composite top row: it suffices to find elements $\xi\in H^{*-1}(BD_8)$ and $\eta\in H^{*-1}(BD_8;\F2)$ with
\begin{equation}\label{thepointahacer}
\rho(\xi)\equiv\eta\;\bmod\mbox{Ker}(\beta^*)\qquad\mbox{and}\qquad\beta^*(\partial(\eta))=\zeta.
\end{equation}
The point is that the top row in~(\ref{strategia}) is fully accessible in view of~(\ref{relacionesenterasenD8})--(\ref{losvaloresdelareductionB}) and the fact (Lemma~\ref{handelauxi} below) that the connecting morphism
\begin{equation}\label{connectingtop}
\partial\colon H^{*-1}(BD_8;\F2)\to H^*(BD_8)
\end{equation}
is well understood in terms of the Wall-Hamada resolution for the trivial $D_8$-module $\mathbb{Z}$. The relevant information can be found in~\cite{handeltohoku} (see particularly Proposition~4.3, Equation~(5.1), and the proofs of Theorems~5.2 and~5.5), where a fairly complete description of the multiplicative properties of the cohomology of $D_8$ is presented in great detail. The explicit result we need is:

\begin{lema}[\cite{handeltohoku}]\label{handelauxi}
The connecting map in~{\em(\ref{connectingtop})} is characterized by $$\partial(u_1^\varepsilon v_1^{2i_1+\varepsilon_1}w_2^{2i_2+\varepsilon_2})=\begin{cases}\varepsilon a_2^{i_1}b_2d_4^{i_2},&\varepsilon_1=\varepsilon_2=0;\\\varepsilon a_2^{i_1}b_2c_3d_4^{i_2},&\varepsilon_1=\varepsilon_2=1;\\(1+\varepsilon)a_2^{i_1+1}d_4^{i_2},&\varepsilon_1=1\mbox{ and }\varepsilon_2=0;\\(1+\varepsilon)a_2^{i_1}c_3^{1-\varepsilon}d_4^{i_2+\varepsilon},&\varepsilon_1=0\mbox{ and }\varepsilon_2=1,\end{cases}$$ for integers $\varepsilon,\varepsilon_1,\varepsilon_2\in\{0,1\}$ and $i_1,i_2\geq0$.
\end{lema}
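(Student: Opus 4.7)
The plan is to exploit the Bockstein long exact sequence together with its derivation-like behavior with respect to integrally liftable classes. The connecting map $\partial$ satisfies $\rho\circ\partial=\Sq^1$ and, whenever $y=\rho(\tilde y)$ is the mod~$2$ reduction of an integral class $\tilde y\in H^*(BD_8)$, the relation $\partial(x\cdot y)=\partial(x)\cdot\tilde y$ holds (all terms involved are $2$-torsion, so signs are irrelevant). Since $v_1^2=\rho(a_2)$ and $w_2^2=\rho(d_4)$, every basis element factors as $u_1^\varepsilon v_1^{2i_1+\varepsilon_1}w_2^{2i_2+\varepsilon_2}=u_1^\varepsilon v_1^{\varepsilon_1}w_2^{\varepsilon_2}\cdot\rho(a_2^{i_1}d_4^{i_2})$, so that the calculation reduces to finding $\partial$ on the eight ``corner'' classes $u_1^\varepsilon v_1^{\varepsilon_1}w_2^{\varepsilon_2}$ with $\varepsilon,\varepsilon_1,\varepsilon_2\in\{0,1\}$.

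First I would compute the three primary values $\partial(u_1)=b_2$, $\partial(v_1)=a_2$, $\partial(w_2)=c_3$; in each case the identity $\rho\,\partial=\Sq^1$, together with (\ref{relacionesenterasenD8})--(\ref{losvaloresdelareductionB}), forces the result because $\rho$ is injective on $2$-torsion in degrees~$2$ and~$3$. The semi-Leibniz rule then handles five of the remaining corner cases: $\partial(1)=0$ trivially; $\partial(u_1v_1)=\partial(v_1w_2)=0$ because these classes are $\rho$-images of $b_2$ and $c_3$; and $\partial(u_1v_1w_2)=\partial(\rho(b_2)\cdot w_2)=b_2\cdot\partial(w_2)=b_2c_3$. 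The delicate case is $\partial(u_1w_2)$: here $\Sq^1(u_1w_2)=0$, so $\partial(u_1w_2)$ lies in the kernel of $\rho$ restricted to the $2$-torsion of $H^4(BD_8)$, which by inspection of (\ref{relacionesenterasenD8})--(\ref{losvaloresdelareductionB}) is exactly $\langle 2d_4\rangle$. Exactness of the Bockstein sequence forces $\partial(u_1w_2)\neq 0$, because $H^3(BD_8)=\langle c_3\rangle$ and $\rho(c_3)=v_1w_2\neq u_1w_2$ shows that $u_1w_2\notin\mathrm{Im}(\rho)$. Hence $\partial(u_1w_2)=2d_4$.

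Assembling the eight corner values via the semi-Leibniz identity $\partial(u_1^\varepsilon v_1^{2i_1+\varepsilon_1}w_2^{2i_2+\varepsilon_2})=\partial(u_1^\varepsilon v_1^{\varepsilon_1}w_2^{\varepsilon_2})\cdot a_2^{i_1}d_4^{i_2}$ reproduces the four-branch formula of the lemma verbatim. For instance, the branch $\varepsilon_1=0$, $\varepsilon_2=1$ combines $\partial(w_2)=c_3$ (when $\varepsilon=0$) and $\partial(u_1w_2)=2d_4$ (when $\varepsilon=1$) into the unified expression $(1+\varepsilon)a_2^{i_1}c_3^{1-\varepsilon}d_4^{i_2+\varepsilon}$, and the other three branches work analogously. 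The main obstacle is the isolated computation $\partial(u_1w_2)=2d_4$: it cannot be detected by $\Sq^1$ alone and requires combining exactness of the Bockstein sequence with the explicit low-degree structure of $H^*(BD_8)$; the rest is essentially bookkeeping.
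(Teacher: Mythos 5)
Your proposal is correct, but it takes a genuinely different route from the paper: the paper offers no internal proof of this lemma at all, deferring instead to Handel's explicit chain-level computations with the Wall--Hamada resolution of the trivial $D_8$-module $\mathbb{Z}$ (specifically Proposition~4.3, Equation~(5.1), and the proofs of Theorems~5.2 and~5.5 of~\cite{handeltohoku}). You instead derive the formula purely formally from the quoted structure results~(\ref{relacionesenterasenD8})--(\ref{losvaloresdelareductionB}): the identity $\rho\circ\partial=\Sq^1$, the $H^*(BD_8)$-linearity $\partial(x\cdot\rho(\tilde y))=\partial(x)\cdot\tilde y$ (signs being irrelevant since $2\partial=0$), and exactness of the Bockstein sequence. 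Your corner computations check out --- $\partial(u_1)=b_2$, $\partial(v_1)=a_2$, $\partial(w_2)=c_3$ by injectivity of $\rho$ on the ($2$-torsion) groups $H^2(BD_8)$ and $H^3(BD_8)$, and $\partial(u_1v_1)=\partial(v_1w_2)=0$, $\partial(u_1v_1w_2)=b_2c_3$ by the module property --- and you correctly isolate the one non-formal step, $\partial(u_1w_2)=2d_4$, settling it by noting that $u_1w_2\notin\operatorname{Im}\rho=\langle v_1w_2\rangle$ in degree $3$ while $\ker\rho$ on the $2$-torsion of $H^4(BD_8)=\langle2\rangle\oplus\mathbb{Z}_4$ is exactly $\langle2d_4\rangle$; the reassembly into the four-branch formula (including the degenerate cases where the coefficient $1+\varepsilon$ kills the class, e.g.\ $2a_2^{i_1+1}d_4^{i_2}=0$) is routine and correct. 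What each approach buys: yours is self-contained modulo the presentations already quoted in Section~\ref{descriptionofcohomologias} (plus $\Sq^1w_2=v_1w_2$, which is quoted in Section~\ref{ahoralaB} and could even be rederived by your own exactness argument), and it shows the connecting map is \emph{forced} by that data, independent of any resolution; the paper's citation, on the other hand, rests on the same explicit resolution that calibrates the choice of generators (cf.\ the footnote at~(\ref{detmncn})), so it guarantees at once that the normalization $q^*(z_2)=b_2$ and the formula of the lemma refer to one consistent set of generators --- a compatibility your argument gets for free only because every value you compute is uniquely determined by~(\ref{relacionesenterasenD8})--(\ref{losvaloresdelareductionB}).
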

Note that $\partial(u_1v_1^{2i_1}w_2^{2i_2+1})=0$ for $i_1>0$, but $\partial(u_1w_2^{2i_2+1})=2d_4^{i_2+1}$. This behavior leads to the summands ``$\iota_{2t+2}$'' and ``$\iota_{2t+4}$'' in~(\ref{AprimeraA})--(\ref{BsegundaB}).

\medskip\noindent
{\bf The relations: main computation instructions.} Elements satisfying~(\ref{thepointahacer}) can be chosen as follows:
\begin{itemize}
\item For $\zeta=a_2\sigma_{2t}\,$ with $t=2\ell+1$, $\ell\geq1$, and $\delta=0$, take $$\eta=\sum_{j=0}^{\ell}\binom{2t-2j}{2j}v_1^{2t+1-4j}w_2^{2j}\quad\mbox{ and }\quad\xi=\sum_{j=0}^{\ell}\binom{2t-1-2j}{2j+1}a_2^{t-1-2j}c_3d_4^j.$$ 
The term in Ker$(\beta^*)$ needed in~(\ref{thepointahacer}) is the $v_1$-multiple of the first sum in~(\ref{lassigmatoras}).
\item For $\zeta=b_2\sigma_{2t}+\iota_{2t+2}\,$ with $t=2\ell+1$, $\ell\geq1$, and $\delta=0$, take $$\eta=u_1w_2^t+\sum_{j=0}^{\ell}\binom{2t-2j}{2j}u_1v_1^{2t-4j}w_2^{2j}\quad\mbox{ and }\quad\xi=\sum_{j=0}^{\ell-1}\binom{2t-1-2j}{2j+1}a_2^{t-2-2j}b_2c_3d_4^j.$$ 
The term in Ker$(\beta^*)$ needed in~(\ref{thepointahacer}) is the $u_1$-multiple of the first sum in~(\ref{lassigmatoras}).
\item For $\zeta=b_2d_4\sigma_{2t-2}+\iota_{2t+4}$ with $t=2\ell$, $\ell\geq2$, and $\delta=0$, take $$\eta=u_1w_2^{t+1}+\sum_{j=0}^{\ell-1}\binom{2t-2-2j}{2j}u_1v_1^{2t-2-4j}w_2^{2+2j}$$ and $$\xi=\sum_{j=0}^{\ell-2}\binom{2t-3-2j}{2j+1}a_2^{t-3-2j}b_2c_3d_4^{j+1}.$$ 
The term in Ker$(\beta^*)$ needed in~(\ref{thepointahacer}) is the $u_1$-multiple of $R_{m+2}$ in~Lemma~\ref{arithauxi}.
\item For $\zeta=a_2\sigma_{2t}$ with $t=\delta=1$, take $\eta=v_1^3$ and $\xi=0$. 
The term in Ker$(\beta^*)$ needed in~(\ref{thepointahacer}) is the first sum in~(\ref{lassigmatoras}).
\item For $\zeta=b_2d_4\sigma_{2t-2}+\iota_{2t+4}$ with $t=2$ and $\delta=1$, take $\eta=u_1v_1^2w_2^2+u_1w_2^3$ and $\xi=a_2b_2c_3$. 
The term in Ker$(\beta^*)$ needed in~(\ref{thepointahacer}) is the $u_1$-multiple of $R_{m+1}$ in~Lemma~\ref{arithauxi}.
\end{itemize}

The second equation in~(\ref{BprimeraB})---the only equation among those involving only torsion elements, and that we have not yet indicated how to check---is exceptional: the method fails to verify it  because the left-most vertical map in~(\ref{strategia}) is not surjective (we deal below with this case). Indeed, although the $\partial$-image of the element
\begin{equation}\label{reduccionentera3}
u_1v_1^2+u_1w_2\in H^3(B(\P^3,2);\F2)
\end{equation}
is $b_2\sigma_2+\iota_4\in H^*(B(\P^m,2))$---the element asserted to be trivial---, any $\rho$-preimage of~(\ref{reduccionentera3}) involves the torsion-free class $e_3$, an element not in the image of the right-most vertical map in~(\ref{strategia}). To clarify this, note that~(\ref{detmncn}), Lemma~\ref{laobsdehand}, Theorem~9.1 in~\cite{taylor}, and the fact (coming from Theorem~\ref{aditivoB}) that $H^4(B(\P^3,2))$ is a cyclic group of order $4$ (necessarily generated by $d_4$) imply the relation $b_2^2=2d_4$ in this group. This is the second equation in~(\ref{BprimeraB}) in view of~(\ref{relacionesenterasenD8}), thus completing the verification of~(\ref{comunesB})--(\ref{lasrelsoddssB}). But more importantly, the new information can be used to shed light on the above viewpoint. Namely, exactness of the bottom row in~(\ref{strategia}) implies that the element in~(\ref{reduccionentera3}) does lie in the image of the mod~2 reduction map $\rho\colon H^3(B(\P^3,2))\to H^3(B(\P^3,2);\F2)$. But $H^3(B(\P^3,2))=\mathbb{Z}\oplus\mathbb{Z}_2$, where $c_3$---the generator of the torsion subgroup---has $\rho(c_3)=v_1w_2$ in view of~(\ref{losvaloresdelareductionB}). Since an $\F2$-basis for $H^3(B(\P^3,2);\F2)$ is given by the three elements $u_1v_1^2$, $v_1w_2$, and $u_1w_2$ (Corollary~\ref{basemod2B}), the torsion-free class $e_3$ in Theorem~\ref{chb2pm}(b) can actually be chosen to have~(\ref{reduccionentera3}) as its mod~2 reduction. In particular, since $H^5(B(\P^3,2))=\mathbb{Z}_2$ (so that the mod~2 reduction map $\rho\colon H^*(B(\P^3,2))\to H^*(B(\P^3,2);\F2)$ is injective in dimension 5) and $H^k(B(\P^3,2))=0$ for $k\geq6$, the relations in~(\ref{lasrelsoddssBenteras}) are easily proved for $m=3$ by checking them after applying the mod~2 reduction map. 

\medskip
The same idea will be used below to verify the equations in~(\ref{lasrelsoddssBenteras}) for general (odd) $m$---the only relations we have not yet indicated how to verify. As for $m=3$, the explicit calculations require making a choice for the integral classes in Theorem~\ref{chb2pm}, which in turn depends on a description of minimal additive generators for $H^*(\bpm)$---a task whose solution we explain next.

\medskip\noindent
{\bf Minimal additive generators and ring presentation.} For $0\leq s\leq r$ let $R_{r,s}$ stand for the element $\sum_{i\geq0}\binom{r-s-i}ia_2^{r-s-2i}d_4^{s+i}\in H^{2r+2s}(BD_8)$ as well as its image under the map $\beta: B(\P^m, 2) \to BD_8$ in~(\ref{obvious-inclusion-B}). There are identities
\begin{equation}\label{identificacion}
R_{r,0}=\sigma_{2r},\quad R_{r,1}=\sigma_{2r+2}-a_2\sigma_{2r},\quad\mbox{and}\quad R_{r,s+2}=d_4R_{r,s}-a_2R_{r,s+1}
\end{equation}
where the first one holds by definition, and the last two are based on the binomial identity $\binom{a}{b}=\binom{a+1}{b+1}-\binom{a}{b+1}$. The next result uses the elements $\iota_{2r}$ in Theorem~\ref{chb2pm}.

\begin{lema}\label{reltionskdanbaseB}
Let $m=2t+\delta$, $\delta\in\{0,1\}$. The following elements vanish in \emph{$H^*(B(\P^m,2))$:}
{\em \begin{enumerate}
\item\label{nadauxiliar} {\em $a_2R_{t,s}\;$ and $\,\;b_2R_{t,s}+\iota_{2t+2s+2},\;$ for $\;0\leq s\leq t$;}
\item {\em $c_3R_{t-1+\delta,s}\;$ for $\;0\leq s\leq t-1+\delta$.}
\end{enumerate}}
\end{lema}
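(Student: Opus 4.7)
The plan is to prove both parts by strong induction on $s$, using the three-term recursion $R_{r,s+2}=d_4R_{r,s}-a_2R_{r,s+1}$ recorded in~(\ref{identificacion}) to pass from steps $s,s+1$ to step $s+2$. This reduces each statement to verifying the two base cases $s=0$ and $s=1$. Since one of the $s=1$ base cases of Part~(1) will lean on Part~(2), I would dispatch Part~(2) first.

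For Part~(2), the base case $s=0$ is immediate: $c_3R_{t-1+\delta,0}=c_3\sigma_{2t-2+2\delta}$ is the relation $c_3\sigma_{2t-2}=0$ from~(\ref{lasrelsevensB}) (if $\delta=0$) or $c_3\sigma_{2t}=0$ from~(\ref{comunesB}) (if $\delta=1$). For $s=1$, the second identity in~(\ref{identificacion}) rewrites $c_3R_{t-1+\delta,1}$ as $c_3\sigma_{2t+2\delta}-a_2(c_3\sigma_{2t-2+2\delta})$; the second summand vanishes by what was just proved, and the first is supplied by~(\ref{comunesB}) or~(\ref{lasrelsoddssB}) depending on the parity of~$m$. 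The inductive step is then the one-line calculation $c_3R_{t-1+\delta,s+2}=d_4(c_3R_{t-1+\delta,s})-a_2(c_3R_{t-1+\delta,s+1})=0$.

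The $s=0$ case of Part~(1) is literally the first two equations of~(\ref{comunesB}). The inductive step for the $a_2$-relation is free from the recursion, while the $b_2$-step needs only $a_2b_2=-b_2^2$ from~(\ref{relacionesenterasenD8}) together with two elementary facts: $b_2\iota=0$ (since $2b_2=0$ and $\iota$ is always twice a power of $d_4$) and $d_4\iota_{2t+2s+2}=\iota_{2t+2s+6}$, the latter by a one-line parity split on $t+s$. At $s=1$, the $b_2$-relation reads off directly from~(\ref{lasrelsevensB}) when $\delta=0$; when $\delta=1$, I would combine $R_{t,1}=\sigma_{2t+2}-a_2\sigma_{2t}$ with the chain $a_2b_2\sigma_{2t}=-b_2^2\sigma_{2t}=b_2\iota_{2t+2}=0$ to reduce $b_2R_{t,1}+\iota_{2t+4}$ to $b_2\sigma_{2t+2}+\iota_{2t+4}$, which is~(\ref{lasrelsoddssB}).

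The one place where Theorem~\ref{chb2pm} does not directly supply the answer---and hence the main obstacle---is the $a_2$-relation at $s=1$ in the case $\delta=0$, where $a_2R_{t,1}=a_2d_4\sigma_{2t-2}$ does not appear among the listed relations. I would bypass it with a $c_3^2$-trick: multiplying the Part~(2) identity $c_3\sigma_{2t-2}=0$ (already at hand from the $s=0$, $\delta=0$ case) by $c_3$ and invoking $c_3^2=-a_2d_4$ from~(\ref{relacionesenterasenD8}) yields $a_2d_4\sigma_{2t-2}=-c_3^2\sigma_{2t-2}=0$, as required.
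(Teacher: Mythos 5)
Your proof is correct and takes essentially the same route as the paper: the paper's own (very terse) argument is exactly an induction through the recursion $R_{r,s+2}=d_4R_{r,s}-a_2R_{r,s+1}$ of~(\ref{identificacion}) with base cases read off from~(\ref{comunesB})--(\ref{lasrelsoddssB}), and the one non-obvious point it highlights is precisely your $c_3^2$-trick, namely that the $c_3$-multiple of $c_3\sigma_{2t-2}=0$ in~(\ref{lasrelsevensB}) yields $a_2R_{t,1}=0$ via $c_3^2+a_2d_4=0$ in~(\ref{relacionesenterasenD8}). No gaps; your write-up just makes the bookkeeping (the $b_2\iota=0$ and $d_4\iota_{2r}=\iota_{2r+4}$ observations) explicit.
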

\begin{proof}
This is an easy exercise using the relations~(\ref{comunesB})--(\ref{lasrelsoddssB}) and~(\ref{identificacion})---in the case of $a_2R_{t,s}$, note that the $c_3$-multiple of the first equation in~(\ref{lasrelsevensB}) becomes $a_2R_{t,1}=0$ in view of the last equation in~(\ref{relacionesenterasenD8}).
\end{proof}

Let $H^*(m)$ be the subring of $H^*(\bpm)$ generated by the classes $a_2,b_2,c_4,d_4$. Thus, besides the unit $1\in H^0(m)$, $H^*(m)$ consists of all the torsion elements in $H^*(\bpm)$. Alternatively, $H^*(m)$ is the image of the morphism induced by the map in~(\ref{obvious-inclusion-B}). 

\begin{proof}[Proof of Theorem~{\em\ref{baseB}}]
The monomials $a_2^ib_2^\varepsilon c_3^{\varepsilon'}d_4^j$ with $i,j\geq0$ and $\varepsilon,\varepsilon'\in\{0,1\}$ are additive generators for $H^*(BD_8)$ in view of~(\ref{relacionesenterasenD8}). Corollary~\ref{suprayeccionB}, Lemma~\ref{reltionskdanbaseB}, and the relation $d_4^{t+\delta}=0$ in~(\ref{lasrelsevensB}) and~(\ref{lasrelsoddssB}) then imply that the elements in~(\ref{gendoresenterosB}) are additive generators for $H^*(m)$ in positive dimensions. Thus, the proof reduces to checking that the elements in~(\ref{gendoresenterosB}) give the right size for the groups reported in Theorem~\ref{aditivoB}. Such a task requires a dimension-wise count analogous to that of the basis elements $u_1^\varepsilon v_1^rw_2^s$ satisfying~(\ref{level1}) or~(\ref{level2}). [The current counting gives more precise information than the one noted at the end of Section~\ref{ahoralaB} since the latter one is performed on elements capturing integral cohomological information only up to higher auxiliary filtration.] The counting needed now is rather simple, and we omit the straighforward details. Yet, for the reader's convenience, Example~\ref{elexale} below deals with a couple of representative cases, namely the ones corresponding to the $(6+\delta)$-th line in the description of $H^*(B(\P^{2t+\delta},2))$ in Theorem~\ref{aditivoB}.
\end{proof}

\begin{ejemplo}\label{elexale}{\em
In dimensions $4\ell$ with $2t+1<4\ell\leq4t+1$, the monomials in~(\ref{gendoresenterosB}) take either one of the forms $a_2^{2i}d_4^{\ell-i}$ and $a_2^{2i-1}b_2d_4^{\ell-i}$ for $0\leq i\leq t-\ell$. For $i>0$, these give $2(t-\ell)$ elements of order 2, whereas the case $i=0$---giving the element $d^\ell$---accounts for a $\mathbb{Z}_4$-group.
}\end{ejemplo}

The above argument also shows that $H^*(m)$ is presented as a ring as indicated in Theorem~\ref{baseB}, except that one has to remove the relations involving the torsion-free positive-dimensional classes $e_{2m-1}$ (for even $m$) and $e_m$ (for odd $m$).

\begin{proof}[Proof of Theorem~{\em\ref{chb2pm}}---sketch of conclusion] It remains to verify the relations in~(\ref{lasrelsoddssBenteras}), that is, the instructions for multiplying with the torsion-free class $e_m\in H^m(\bpm)$ in Theorem~\ref{chb2pm}(b). As a first step we choose explicit generators for all positive-dimensional $\mathbb{Z}$-groups.

\medskip
As $H^{4t-1}(B(\P^{2t},2))=\mathbb{Z}$, there is no real choice to make (except for sign) for $m=2t$: Corollary~\ref{basemod2B} forces
\begin{equation}\label{mod2redpar}
\rho(e_{2m-1})=u_1w_2^{m-1},
\end{equation}
which is the only nonzero element in $H^{4t-1}(B(\P^{2t},2);\F2)=\mathbb{Z}_2$---(\ref{mod2redpar}) has also been noted at the end of the paragraph `Second order Bocksteins' in Section~\ref{ahoralaB}. 

\medskip
The situation for $m$ odd is not as direct, but can still be analyzed using~(\ref{strategia}) with $*=m+1$. Namely, Theorem~\ref{baseB} and Corollary~\ref{basemod2B} give explicit minimal generators (actual $\F2$-basis if no $\mathbb{Z}_4$-summands are involved) for the torsion subgroups of the groups in the lower row of~(\ref{strategia}), whereas~(\ref{losvaloresdelareductionB}), Corollary~\ref{suprayeccionB}.\ref{factepiB}, and Lemma~\ref{handelauxi} can be used to describe the morphisms between these groups. The morphisms behave transparently on bases, sending basis elements to zero or to other basis elements, except for the basis element $u_1v_1^{m-1}\in H^m(\bpm;\F2)$. Indeed, the second relation in~(\ref{comunesB}) is needed to express $\partial(u_1v_1^{m-1})$ as a linear combination of minimal generators in $H^{m+1}(\bpm)$. This yields detailed $\F2$-bases for the kernel of $\partial$ and for the image under $\rho$ of the torsion subgroup of $H^m(\bpm)$ and, as a result, an element is singled out in the former kernel-group which is not in the latter image-group. Then, just as in the case $m=3$ discussed right after~(\ref{reduccionentera3}), exactness of the lower row in~(\ref{strategia}) implies that the singled-out element must be the mod~2 reduction of a torsion-free class $e_m$. The reader is encouraged to fill in the easy details verifying the above discussion, and we content ourselves with reporting the net outcome: For $m=2t+1$, the class $e_m$ in Theorem~\ref{chb2pm}(b) can be chosen to have
\begin{equation}\label{red1yeah}
\rho(e_m)=\sum_{i\geq0}\binom{t-i}{i}u_1v_1^{2t-4i}w_2^{2i}+t\hspace{.3mm}u_1w_2^t.
\end{equation}
Note this is in agreement---and refines---the considerations at the end of the paragraph `Second order Bocksteins' in Section~\ref{ahoralaB}.

\medskip
The remainder of the proof is standard: The first relation in~(\ref{lasrelsoddssBenteras}), as well as the last two for $m\leq3$, hold for dimensional reasons (note that the sum in~(\ref{lasrelsoddssBenteras}) is empty if $m\leq3$, whereas the relation $0=b_2R_{1,1}+\iota_{6}$ in Lemma~\ref{reltionskdanbaseB}.\ref{nadauxiliar} gives the triviality of $b_2d_4$, the right-hand-side term in the third relation in~(\ref{lasrelsoddssBenteras}) for $m=3$). For the rest of the relations one first shows, by straightforward calculation (see Example~\ref{ejemplito1} below), that they hold after evaluating under the mod~2 reduction map $\rho\colon H^*(\bpm)\to H^*(\bpm;\F2)$. As this map is monic on torsion elements of dimension not divisible by four, the asserted relations in $H^*(\bpm)$ hold for free, except for the third relation in~(\ref{lasrelsoddssBenteras}) if $m\equiv1\bmod4$. Indeed, if $\ell\geq1$, the kernel of $\rho\colon H^{4\ell+4}(B(\P^{4\ell+1},2))\to H^{4\ell+4}(B(\P^{4\ell+1},2);\F2)$ is a copy of $\mathbb{Z}_2$ generated by $2d_4^{\,\ell+1}$, so that all we have here is $c_3e_{4\ell+1}=\eta d_4^{\,\ell+1}$ for $\eta\in\{0,2\}$. To solve the indeterminacy (for $m\neq5$), compute $$\eta d_4^{\,\ell+2}=c_3(d_4e_{4\ell+1})=c_3\sum_{i=1}^{\ell}\binom{2\ell-i}{i-1}a_2^{2\ell-2i}b_2c_3d_4^i=\sum_{i=1}^{\ell}\binom{2\ell-i}{i-1}a_2^{2\ell+1-2i}b_2d_4^{i+1}$$ and note that the last sum is the $d_4$-multiple of the left-hand-side term of the relation $b_2R_{2\ell,1}=\iota_{4\ell+4}$ in Lemma~\ref{reltionskdanbaseB}.\ref{nadauxiliar}. This yields $\eta d_4^{\,\ell+2}=2d_4^{\,\ell+2}$ or, equivalently (as $d_4^{\,\ell+2}$~is of order~4 if $\ell\geq2$), $\eta=2$.
\end{proof}

\begin{ejemplo}\label{ejemplito1}{\em
We verify in detail the last relation in~(\ref{lasrelsoddssBenteras}). Recall $m=2t+1$ and $t=2\ell+\kappa$ with $\kappa\in\{0,1\}$. Use~(\ref{losvaloresdelareductionB}),~(\ref{red1yeah}), and Lemma~\ref{arithauxi} (with $s=1$) to get
\begin{eqnarray*}
\rho(d_4e_m)&=&\sum_{i=0}^{\ell}\binom{t-i}{i}u_1v_1^{2t-4i}w_2^{2i+2}+tu_1w_2^{t+2}\\ &=&u_1v_1^{2t}w_2^2+\sum_{i=1}^{\ell}\binom{t-i}{i}u_1v_1^{2t-4i}w_2^{2i+2}+tu_1w_2^{t+2}\\ &=&u_1w_2\sum_{i=1}^{2\ell+\kappa}\binom{2t-i}{i}v_1^{2t-2i}w_2^{i+1}+\sum_{i=1}^{\ell}\binom{t{-}i}{i}u_1v_1^{2t-4i}w_2^{2i+2}+tu_1w_2^{t+2}.
\end{eqnarray*}
Note that the even indices $i$ in the summation from $1$ to $2\ell+\kappa$ cancel out the summation running over $1\leq i\leq\ell$. On the other hand, if $t$ is odd (i.e.~if $\kappa=1$), then the summand with index $i=2\ell+1$ cancels out the final summand $tu_1w_2^{t+2}$---if $\kappa=0$, none of these terms appear. The above expression then simplifies to $$\rho(d_4e_m)=u_1w_2\sum_{i=1}^{\ell}\binom{2t-2i+1}{2i-1}v_1^{2t-4i+2}w_2^{2i}=\sum_{i=1}^{\ell}\binom{t-i}{i-1}u_1v_1^{2t-4i+2}w_2^{2i+1}.$$ But~(\ref{losvaloresdelareductionB}) implies $u_1v_1^{2t-4i+2}w_2^{2i+1}=v_1^{2t-4i}\cdot u_1v_1\cdot v_1w_2\cdot w_2^{2i}=\rho(a_2^{t-2i}b_2c_3d_4^{\,i})$ which, as explained in the proof sketch above, gives the $d_4$-relation asserted in~(\ref{lasrelsoddssBenteras}).
}\end{ejemplo}

\vspace{1.5mm}
Carlos Dom\'{\i}nguez\quad {\tt cda@math.cinvestav.mx}

{\sl Departamento de Matem\'aticas, CINVESTAV--IPN

M\'exico City 07000, M\'exico}

\vspace{2mm}

Jes\'us Gonz\'alez\quad {\tt jesus@math.cinvestav.mx}

{\sl Departamento de Matem\'aticas, CINVESTAV--IPN

M\'exico City 07000, M\'exico}

\vspace{2mm}

Peter Landweber\quad {\tt landwebe@math.rutgers.edu}

{\sl Department of Mathematics, Rutgers University

Piscataway, NJ 08854, USA}

\end{document}